\newtheorem{thm}{Theorem}
\newtheorem{col}{Corollary}
\newtheorem{lem}{Lemma}
\theoremstyle{remark}
\def\R{\mathbb{R}}
\def\N{\mathbb{N}}
\def\E{\mathbb{E}}
\def\ilsk#1#2{\left<#1 , #2\right>}
\def\1{1{\hskip -2.5 pt}\hbox{\textup{l}}}
\begin{document}

\title{Trait evolution in two--sex populations}
\author[P. Zwole\'nski]{Pawe{\l} Zwole{\'n}ski}
\address{Institute of Mathematics,
Polish Academy of Sciences,
Bankowa 14, 40-007 Katowice, Poland.}
\email{pawel.zwolenski@gmail.com}
\thanks{This research was partially supported
by Warsaw Center of Mathematics and Computer Science from the KNOW grant of the Polish Ministry of Science and Higher Education.}
\keywords{individual--based model, phenotypic evolution, two--sex populations, system of nonlinear evolution equations,
asymptotic stability}
\subjclass[2010]
{Primary 47J35:  Secondary: 34G20, 60K35, 92D15}  
\begin{abstract}
We present an individual--based model of phenotypic trait evolution in two--sex populations, which includes semi--random mating of individuals of the opposite sex, natural death and intra--specific competition. By passing the number of individuals to infinity, we derive the macroscopic system of nonlinear differential equations describing the evolution of trait distributions in male and female subpopulations. We study solutions, give criteria for persistence or extinction, and state a theorem on asymptotic stability, which we later apply to particular examples of trait inheritance.
\end{abstract}
\maketitle

\section{Introduction}
Over last few decades two--sex populations were often studied from the viewpoint of mathematical modelling  (see e.g. \cite{asm,had}). One of the first attempted descriptions of birth rate and matting functions in two--sex populations were given in \cite{good, ken}. In the paper \cite{fred} two assumptions on mating function were stated and its general form was provided. The above--mentioned models exhibit exponential grow due to the lack of competition. Addition of intrasexual competition to the models leads to logistic equations and results in more realistic bounded solutions and usually causes stabilization of population size (see  e.g. \cite{liu, rose}). One of the reason for the interest in more complicated two--sex models comes from attempts to describe sexually transmitted diseases in human population (see \cite{d1, d2}). Modern models use more and more advanced mathematical tools, such as partial differential equations or stochastic processes, in order to include some of the complicated structures of populations.  For example, attempted description of age--structured two--sex populations can lead to systems of non--linear partial differential equations (see e.g. \cite{bus, au}).

This paper studies constant lifetime phenotypic trait evolution in two--sex populations, using techniques of individual--based modeling. In spite of the vast  literature concerning this type of models in evolutionary biology and population dynamics, a great deal of them describe asexual populations (see \cite{ch, ferriere_tran, fo_mel}). Only few models concerning hermaphroditic organisms have appeared so far (see \cite{colet, rud_zwo}). To the extend of our knowledge, there is a lack in the field of individual--based modeling for two--sex populations and mathematical analysis of equations derived as macroscopic approximations. 

In this paper a phenotypic trait is not sex--linked, i.e., all the trait--coding genes lie outside the sex chromosome (allosome). We distinguish two subpopulations of males and females of the same species, and assume that the mating is semi--random, i.e., there is a function of individual capability of mating, which depends on individual's trait and sex. This function is a rate at which every individual mates with a random partner, which is chosen from all living individuals of the opposite sex according to some distribution also based on the capability function (see \cite{arino, rud_w, rud_w2, rud_zwo}).  After mating an offspring is born, and its sex is male or female with probability $\frac{1}{2}$ by virtue of Fisher's principle on sex ratio. The phenotypic trait is inherited from parents as a mean parental trait with some stochastic noise. Moreover, individuals can die naturally or in intra--specific competition at trait--dependent rates. All of the above events happen randomly in discrete population in continuous time. The evolution of population is described by a sequence of measure--valued stochastic processes (individual--based model).

The main goal of the paper is to study macroscopic equations, which are derived as a law of large numbers for the stochastic processes considered, when the number of individuals tends to infinity. We obtain the system of two nonlinear differential equations, which describe the evolution of trait distributions in male and female subpopulations, respectively. We study existence and uniqueness of the solutions, and examine total number of individuals -- we give criterion for persistence of population, and also show when extinction occurs. The most important result concerns asymptotic stability of solutions: we investigate when the distributions of phenotypic traits in male and female subpopulations tend to a stationary solution, which is the same for both sexes. This implies that the distribution of non--sex--linked phenotypic traits become the same for males and females after long time of evolution. The asymptotic result is a transmission of analogous theorem from hermaphroditic populations (see \cite{rud_zwo}). In order to show applications of our result, we give two examples of mean parental trait inheritance with different forms of stochastic noise.

The structure of the paper is the following: in the next section we explain notation and gather all the model assumptions. Also in that part, we describe the individual--based model and introduce stochastic processes, discuss their existence and state the limit theorem (the law of large numbers), which gives the equation for macroscopic approximation of studied stochastic processes. In Section 3, we derive and study equations for trait distributions in male and female subpopulations. We prove existence and uniqueness of solutions in space of finite Borel measures and study total number of populations. Criteria for persistence and extinction are given. In Section 4, theorem on asymptotic stability of trait distributions is stated and proven. Section 5 includes examples of trait inheritance and application of our asymptotic results. In Section 6 we summarize our work and give future perspectives for extending the model.

\section{The model}
\subsection{Assumptions and parameters of model}
We assign every individual some element from a set $\mathbb{X}:=X\times\{\Venus,\Mars\}$, where $X$ is non--empty subset of $\R^d$ for some $d\in\N$. The first coordinate of $\mathbb X$'s members describes individual's \textit{phenotypic trait}, the second one its \textit{sex} ($\Venus$ corresponds to female and $\Mars$ is male). Both phenotypic trait and sex are assumed to be constant in individual's lifetime. We simply call elements of $\mathbb X$ \textit{traits}. For convenience sake we also consider following sets $\mathbb F:=X\times\{\Venus\}$, $\mathbb M:=X\times\{\Mars\}$. We also impose the following useful convention: if $\mathbf x\in\mathbb X$ is individual's trait, then we denote by the same, but non--bold letter $x$ its phenotypic trait, i.e., $x$ is the first coordinate of $\mathbf x$.

\subsubsection{Mating}
We adapt \textit{semi--random mating/coagulation} models (see \cite{colet, rud_w, rud_w2, rud_zwo}) to the two--sex population case: an individual of trait $\mathbf x \in\mathbb X$ has a rate $p(\textbf x)$ of \textit{initial capability of mating}, at which it starts mating by choosing a partner of the opposite sex. A female (resp. male) chooses a partner of trait $\mathbf y$ form all living males (resp. females) from the distribution
$$\frac{p(\textbf y)}{\sum_i p(\textbf w_i)}$$
where the sum in the denominator extends over all living males (resp. females), and $\textbf w_i$ are their traits. In other words, if the trait distribution in current population is described by some measure $\mu$, then \textit{mating rate} $m(\mathbf x, \mathbf y, \mu)$ of individuals of traits $\textbf x,\mathbf y$ is 
\begin{equation}\label{mrate}
m(\mathbf x,\mathbf y,\mu)=p(\mathbf x)\1_{\mathbb F}(\mathbf x) \frac{p(\mathbf y)\1_{\mathbb M}(\mathbf y)}{\int_\mathbb{M} p(\mathbf w)\mu(d\mathbf w)} +p(\mathbf x)\1_{\mathbb M}(\mathbf x) \frac{p(\mathbf y)\1_{\mathbb F}(\mathbf y)}{\int_\mathbb{F} p(\mathbf w)\mu(d\mathbf w)},
\end{equation}
where $\1_A$ is an indicator function of set $A$. We assume that the function $p$ is positive, continuous and upper--bounded by some constant $\overline p>0$.

\subsubsection{Trait inheritance}
We assume that after every mating a new individual is born, and according to Fisher's principle (see \cite{fisher}), is male or female with probability $\frac{1}{2}$. We consider only these phenotypic traits which are not sex--linked. We assume that if $x,y\in X$  are parental phenotypic traits, then the offspring's trait $z$ comes from distribution $k(x,y,dz)$. We assume that for every $x,y\in X$ the measure $k(x,y,\cdot)$ is a Borel probability measure and for every bounded and continuous function $f\colon X\to\R$, the mapping $(x,y)\mapsto \int_X f(z)k(x,y,dz)$ is continuous. Moreover, we suppose that $k(x,y,\cdot)=k(y,x,\cdot)$ for every $x,y\in X$. Denote by $K(x,y,\cdot)$ measure on $\mathbb X$ satisfying $K(x,y,A\times\{\Venus\})=K(x,y,A\times\{\Mars\})=\frac{1}{2}k(x,y,A)$ for every Borel subset $A\subset X$.

\subsubsection{Natural death and competition}
We assume that if an individual has trait $\mathbf x$, then it can \textit{die naturally} at rate $D(\mathbf x)$. Moreover, we consider \textit{intra--specific competition} (see \cite{fo_mel, rud_zwo}): in current population described by a measure $\mu$, an individual with trait $\mathbf x$ dies in competition with rate $C(\mathbf x,\mu):=\int_\mathbb X U(\mathbf x, \mathbf y)\mu(d\mathbf y).$ Function $U(\mathbf x, \mathbf y)$ describes ``how often'' individual with trait $\mathbf x$ looses competition with one of trait $\mathbf y$ (\textit{competition kernel}).  We assume that functions $D$ and  $U$ are positive, continuous and upper--bounded by $\overline D, \overline U$ respectively.

\subsubsection{Population dynamics}
We consider a finite population in continuous time. At a random time individuals with traits $\mathbf x, \mathbf y$ mate with rate $m(\mathbf x, \mathbf y, \mu)$ given by the formula (\ref{mrate}), where $\mu=\sum_{i=1}^n \delta_{\mathbf w_i}$, provided  the current population consists of individuals with traits $\mathbf w_1,\ldots,\mathbf w_n$. After mating an offspring is born with probability $1$. Its sex is male or female with probability $\frac{1}{2}$ and phenotypic trait comes from distribution $k(x,y,\cdot)$. Moreover, an individual of trait $\mathbf x$ can die naturally at rate $D(\mathbf x)$ or loosing competition with other members of population at rate $U(\mathbf x,\mu)$.
All the events and interactions are assumed to take place independently.

\subsection{Stochastic processes and limit theorem}
Denote by $\mathcal M(\mathbb X)$ set of all finite Borel measures on $\mathbb X$. For any $N\in\N$ we define following sequence of subsets of $\mathcal M(\mathbb X)$
\begin{equation}
\mathcal M^N=\Bigg\{\frac{1}{N}\sum_{i=1}^n \delta_{\mathbf w_i}\colon n\in\N, \mathbf w_i\in\mathbb X\Bigg\}.
\end{equation}
We study a sequence $(\nu^N)_{N\in\N}$ of $\mathcal M^N$--valued, continuous time stochastic processes given by the infinitesimal generators
\begin{multline}\label{LN}
L^N\phi(\nu)=N\int_\mathbb X\int_\mathbb X\int_\mathbb X\bigg(\phi\Big(\nu+\frac{1}{N}\delta_{\mathbf z}\Big)-\phi(\nu)\bigg)m(\mathbf x,\mathbf y,\nu)K(x,y,d\mathbf z)\nu(d\mathbf x)\nu(d\mathbf y)\\
+N\int_\mathbb X\bigg(\phi\Big(\nu-\frac{1}{N}\delta_{\mathbf x}\Big)-\phi(\nu)\bigg)\Big(D(\mathbf x)+\frac{1}{N}C(\mathbf x,\nu)\Big)\nu(d\mathbf x)
\end{multline}
for any measurable and bounded $\phi\colon\mathbb X\to\R$.
Notice that the processes given by above generators are jump processes on $\mathcal M^N$. The first term on the right--hand side of (\ref{LN}) describes mating and trait inheritance including Fisher's principle. The second term on the right--hand side of (\ref{LN}) corresponds to natural death and competition, whose rate is rescaled by the factor $\frac{1}{N}$.  Given initial value $\nu^N_0\in\mathcal M^N$, under the model's assumptions, there exists a $\mathcal M^N$--valued Markov process $(\nu^N_t)_{t\geq 0}$ with infinitesimal generator given by (\ref{LN}) (see \cite{fo_mel}).
\begin{thm}\label{pwl}
Suppose that the sequence of initial values $(\nu^N_0)_{N\in\N}$ converges to some measure $\nu\in \mathcal M(\mathbb{X})$ in topology of weak convergence of measures. Then for every $T>0$ the sequence of processes converges in distribution in Skorokhod space $\mathcal D\big([0,T], \mathcal M(\mathbb{X})\big)$ to the deterministic and continuous flow of measures $\mu\colon [0,T]\to\mathcal M(\mathbb{X})$ satisfying for every $0\leq t\leq T$ following equation
\begin{multline}\label{eq_weak}
\ilsk{\mu_t}{\phi}=\ilsk{\mu_0}{\phi}+\int_0^t\int_\mathbb X\int_\mathbb X\int_\mathbb X \phi(\mathbf z)m(\mathbf x,\mathbf y,\nu)K(x,y,d\mathbf z)\mu_s(d\mathbf x)\mu_s(d\mathbf y)ds\\
-\int_0^t\int_\mathbb X\phi(x)\bigg(D(\mathbf x)+\int_\mathbb X U(\mathbf x,\mathbf y)\mu_s(d\mathbf y)\bigg)\mu_s(d\mathbf x)ds
\end{multline}
for any measurable and bounded $\phi\colon\mathbb X\to\R.$
\end{thm}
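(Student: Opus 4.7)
The plan is to follow the classical Fournier--M\'el\'eard framework \cite{fo_mel} for the passage from an individual--based jump model to its macroscopic limit. The argument has three ingredients: a uniform moment bound on the total mass of $\nu^N$, tightness of the laws of $(\nu^N)$ in $\D([0,T],\mathcal M(\mathbb X))$, and identification plus uniqueness of the limit.

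For each bounded measurable $\phi$, Dynkin's formula gives the decomposition
\begin{equation*}
\ilsk{\nu^N_t}{\phi}=\ilsk{\nu^N_0}{\phi}+A^N_t(\phi)+M^N_t(\phi),
\end{equation*}
where $A^N_t(\phi)$ has the same structure as the deterministic right--hand side of \eqref{eq_weak} evaluated along $\nu^N$, and $M^N_t(\phi)$ is a purely discontinuous martingale whose quadratic variation $\qv{M^N(\phi)}_t$ is of order $1/N$. Taking $\phi\equiv\1$ and using that the double integral of $m$ against $\nu\otimes\nu$ is bounded by $\overline p\ilsk{\nu}{\1}$ (the denominators in \eqref{mrate} cancel against one of the integrations), a Gronwall argument yields the uniform--in--$N$ moment bound $\E\sup_{t\leq T}\ilsk{\nu^N_t}{\1}^q<\infty$ for any $q\geq 1$. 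Tightness then follows by Roelly's criterion for measure--valued processes, which reduces matters to the real--valued processes $\ilsk{\nu^N_\cdot}{\phi}$ for $\phi$ ranging over a countable separating family of continuous bounded functions on $\mathbb X$; each such process is tight by the Aldous--Rebolledo criterion, since the drift $A^N_\cdot(\phi)$ has uniformly controlled total variation and the martingale part vanishes in probability.

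The main obstacle is the identification step, because $m(\mathbf x,\mathbf y,\nu)$ contains the denominators $\int_\mathbb M p\,d\nu$ and $\int_\mathbb F p\,d\nu$, so the map $\nu\mapsto m(\mathbf x,\mathbf y,\nu)$ is discontinuous on the set where either subpopulation is empty. Any subsequential limit is supported on continuous paths because the jumps of $\nu^N$ are of size $1/N\to 0$. I would circumvent the discontinuity by working with the product $m(\mathbf x,\mathbf y,\nu)\nu(d\mathbf x)\nu(d\mathbf y)$ as a whole: after the cancellation just mentioned, any test integral $\int\!\int F(\mathbf x,\mathbf y)m(\mathbf x,\mathbf y,\nu)\nu(d\mathbf x)\nu(d\mathbf y)$ becomes a bounded continuous function of $\nu$ on the open set $\{\int_\mathbb F p\,d\nu\cdot\int_\mathbb M p\,d\nu>0\}$. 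A truncation argument replacing $1/\int p\,d\nu$ by $1/(\varepsilon\vee\int p\,d\nu)$, then passing first $N\to\infty$ and afterwards $\varepsilon\to 0$, identifies every subsequential limit as a solution of \eqref{eq_weak}. Uniqueness of such solutions in $\mathcal M(\mathbb X)$ follows from a Gronwall estimate on the total--variation distance between two candidate solutions, using the a priori mass bound to obtain the required local Lipschitz dependence of the integrands on $\mu$; this upgrades tightness plus subsequential identification into convergence of the full sequence.
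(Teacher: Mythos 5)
Your proposal is correct and follows exactly the standard Fournier--M\'el\'eard scheme (uniform moment bound, Roelly plus Aldous--Rebolledo tightness, martingale--problem identification of subsequential limits, Gronwall uniqueness) that the paper itself relies on: the paper gives no proof of Theorem~\ref{pwl} beyond the remark that it is standard and can be adapted from \cite{rud_w}. The one genuinely model--specific point, the discontinuity of $\nu\mapsto m(\mathbf x,\mathbf y,\nu)$ coming from the denominators, is handled adequately by your cancellation--plus--truncation argument, since the bounded death and competition rates keep $\int_{\mathbb F}p\,d\mu_t$ and $\int_{\mathbb M}p\,d\mu_t$ bounded away from zero on $[0,T]$ whenever they are positive at $t=0$.
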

The proof of above statement is standard and can be adapted e.g. from \cite{rud_w}.

\section{System of macroscopic equations}
\subsection{The system and solutions} Define $m_t(A)=\mu_t(A\cap \{\Mars\})$ and $f_t(A)=\mu_t(A\cap \{\Venus\})$ -- the measures describing evolution of traits in male and female subpopulations, respectively. Setting in (\ref{eq_weak}) $\phi=\1_{A\times \{\Mars\}}$ and then $\phi=\1_{A\times \{\Venus\}}$, and rewriting into differential form, we obtain
\begin{equation}\label{systemzero}
\left\{\begin{array}{l}
\begin{aligned}
\frac{d}{dt}m_t(dz)&=\int_X\int_X p(x,y,m_t,f_t) k(x,y,dz)f_t(dx)m_t(dy)\\
&-\bigg(D_m(z)+\int_X U_{m,m}(z,y)m_t(dy)+\int_X U_{m,f}(z,y)f_t(dy)\bigg)m_t(dz),
\end{aligned}\vspace*{0.5cm}\\
\begin{aligned}
\frac{d}{dt}f_t(dz)&=\int_X\int_X p(x,y,m_t,f_t) k(x,y,dz)f_t(dx)m_t(dy)\\
&-\bigg(D_f(z)+\int_X U_{f,m}(z,y)m_t(dy)+\int_X U_{f,m}(z,y)f_t(dy)\bigg)f_t(dz),
\end{aligned}\end{array}\right.
\end{equation}
where 
\begin{enumerate}
\item $p(x,y,\mu,\nu):=\frac{1}{2}p_f(x)p_m(y)\Big(\frac{1}{\int_X p_m(w)\mu(dw)}+\frac{1}{\int_X p_f(w)\nu(dw)}\Big),$
\item $p_f(x):=p\big((x,\Venus)\big), p_m(x):=p\big((x,\Mars)\big)$ -- individual capabilities of mating in the female and respectively, male subpopulations, 
\item $D_f(x):=D\big((x,\Venus)\big), D_m(x):=D\big((x,\Mars)\big)$ -- natural death rates in the female and respectively, male subpopulations,
\item $U_{f,f}(x,y):=U\big((x,\Venus),(y,\Venus)\big)$ -- competition kernel between females,
\item $U_{m,m}(x,y):=U\big((x,\Mars),(y,\Mars)\big)$ -- competition kernel between males,
\item $U_{m,f}(x,y):=U\big((x,\Mars),(y,\Venus)\big)$ -- competition kernel describing the rate of competition loss of the males due to females,
\item $U_{f,m}(x,y):=U\big((x,\Venus),(y,\Mars)\big)$ -- competition kernel describing the rate of competition loss of the females due to males.
\end{enumerate}
Note that system (\ref{systemzero}) is a trait--structured population analogue of some classic two--sex population models well--known from the literature. For example, our model contains more general, trait--dependent version of some of the mating functions studied in \cite{fred, good, ken}, and intrasexual competition considered in \cite{liu, rose}. 

\medskip

Recall that for any finite Borel measure $\mu$ on $X$ we can introduce the total variation norm by formula $\|\mu\|_{\textup{TV}}:=\sup_f \int_X f(x)\mu(dx),$ where the supremum is taken over the set of all measurable functions $f\colon X\to\R$ such that $|f(x)|\leq 1$ for all $x\in X$. Under weaker conditions than in Theorem \ref{pwl}, we are able to prove existence and uniqueness of solutions to (\ref{systemzero}) in stronger norm than Theorem~\ref{pwl} provides. The proof of following result can be easily adapted from proof of Theorem~2 in \cite{rud_zwo}.
\begin{thm}\label{exist1}
Suppose that $p_f,p_m,D_f,D_m,U_{f,f},U_{f,m},U_{m,f},U_{m,m}$ are measurable, upper--bounded and bounded from below by some positive constants. For every $\mu_0,\nu_0\in\mathcal M(X)$ there exists unique pair of functions $\mu,\nu\colon [0,\infty)\to\mathcal M(X),$ which is the solution of system $(\ref{systemzero})$ with initial conditions $\mu_0,\nu_0$. The functions $\mu_t,\nu_t$ are continuous and bounded function in the norm $\|\cdot\|_{\textup{TV}}.$
\end{thm}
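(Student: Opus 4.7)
The plan is to recast the system (\ref{systemzero}) in mild (Duhamel) form and apply the Banach fixed point theorem in a suitable complete metric space of pairs of measure--valued curves, following the strategy used for Theorem~2 in \cite{rud_zwo}. Let $\underline p, \underline D, \underline U$ denote positive lower bounds and $\overline p, \overline D, \overline U$ upper bounds of the coefficient functions. If $\mu_0 = 0$ or $\nu_0 = 0$, the birth integrand in (\ref{systemzero}) vanishes identically and the system decouples into two linear equations with bounded coefficients whose unique $\mathcal M(X)$--valued solutions are immediate; so we assume $\alpha_0 := \min(\|\mu_0\|_{\textup{TV}}, \|\nu_0\|_{\textup{TV}}) > 0$.

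Introduce the total removal rate
$$\Lambda^m_t(z) := D_m(z) + \int_X U_{m,m}(z,y)\, m_t(dy) + \int_X U_{m,f}(z,y)\, f_t(dy),$$
define $\Lambda^f_t$ analogously, and write $B_t(dz) := \int_X \int_X p(x,y,m_t,f_t)\, k(x,y,dz)\, f_t(dx)\, m_t(dy)$ for the common birth measure. By Duhamel's formula the system (\ref{systemzero}) is equivalent to
\begin{align*}
m_t(dz) &= e^{-\int_0^t \Lambda^m_s(z)\,ds}\, m_0(dz) + \int_0^t e^{-\int_s^t \Lambda^m_u(z)\,du}\, B_s(dz)\, ds,\\
f_t(dz) &= e^{-\int_0^t \Lambda^f_s(z)\,ds}\, f_0(dz) + \int_0^t e^{-\int_s^t \Lambda^f_u(z)\,du}\, B_s(dz)\, ds.
\end{align*}
Testing these against $\phi \equiv 1$, one bounds the total birth rate by $\tfrac12 (\overline p^{\,2}/\underline p)\bigl(\|m_t\|_{\textup{TV}} + \|f_t\|_{\textup{TV}}\bigr)$ and the instantaneous decay rate by $\overline D + \overline U\,(\|m_t\|_{\textup{TV}} + \|f_t\|_{\textup{TV}})$; standard ODE comparison then gives, on any finite interval $[0,T]$, a priori bounds $\alpha_0 e^{-\gamma T} \leq \|m_t\|_{\textup{TV}}, \|f_t\|_{\textup{TV}} \leq M(T)$ depending only on $T$, $\alpha_0$, and the data. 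In particular $\int p_m\, dm_t$ and $\int p_f\, df_t$ lie in a compact subinterval of $(0,\infty)$ uniformly in $t$.

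With these bounds secured, consider the complete metric space
$$\mathcal E_T := \bigl\{(m,f) \in C([0,T], \mathcal M(X))^2 : \alpha_0 e^{-\gamma T} \leq \|m_t\|_{\textup{TV}},\ \|f_t\|_{\textup{TV}} \leq M(T) \ \forall t\bigr\}$$
with metric $d\bigl((m,f),(\tilde m,\tilde f)\bigr) := \sup_{t \leq T}\bigl(\|m_t - \tilde m_t\|_{\textup{TV}} + \|f_t - \tilde f_t\|_{\textup{TV}}\bigr)$, and let $\Psi$ send $(m,f)$ to the right--hand sides of the Duhamel system above. The exponential factors are bounded by $1$, and the birth integrand is Lipschitz in $(m,f)$ under $\|\cdot\|_{\textup{TV}}$ precisely because the denominators of $p(x,y,m,f)$ are bounded below on $\mathcal E_T$. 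Routine estimates then yield $d\bigl(\Psi(m,f), \Psi(\tilde m, \tilde f)\bigr) \leq C T\cdot d\bigl((m,f),(\tilde m, \tilde f)\bigr)$, so $\Psi$ is a contraction on $\mathcal E_T$ for $T$ small enough and admits a unique fixed point. Since the a priori bounds depend only on $T$, $\alpha_0$, and the data, the local existence time can be iterated and local solutions glue into the required unique global continuous $\mathcal M(X)$--valued solution.

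The main obstacle is the nonlinear quotient $1/\int p_m\, dm_t$ in the mating rate: it is Lipschitz in $m_t$ only once this integral is bounded away from zero, so the coupled lower bound on total masses---showing that births cannot drive either sex to extinction in finite time---is the technical core of the argument. The remaining estimates are a direct adaptation of the Picard--type iteration in \cite{rud_zwo}.
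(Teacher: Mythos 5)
Your strategy --- rewriting (\ref{systemzero}) in Duhamel form and running a Banach fixed--point argument on a set of measure--valued curves whose total masses are confined to a compact subinterval of $(0,\infty)$ --- is exactly the route the paper intends: it gives no proof of its own and defers to an adaptation of Theorem~2 of \cite{rud_zwo}, which is the same Picard--type iteration. You correctly isolate the real difficulty, namely that the quotients $1/\int_X p_m\,dm_t$ and $1/\int_X p_f\,df_t$ are Lipschitz in total variation only once these integrals are bounded away from zero, and your lower bound $\alpha_0e^{-\gamma T}$ (masses decay at most exponentially because the removal rate is bounded on $\mathcal E_T$) handles this; note that the exact identity $B_t(X)=\tfrac12\big(\int_Xp_f\,df_t+\int_Xp_m\,dm_t\big)$ would even sharpen your birth--rate bound.

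One piece of the conclusion is not delivered by the argument as written: the theorem asserts that $\mu_t,\nu_t$ are \emph{bounded} in $\|\cdot\|_{\textup{TV}}$, which must mean uniformly bounded on $[0,\infty)$ (continuity already gives boundedness on compacts, and the paper later relies on upper--boundedness of solutions for all $t$ in the phase--plane argument of Theorem~\ref{astotal}). Your $M(T)$ comes from a linear Gronwall estimate and grows like $e^{\overline p T}$, so it does not give this. The fix uses precisely the hypothesis you never invoke, the positive lower bounds on the death and competition rates: setting $S(t)=\|m_t\|_{\textup{TV}}+\|f_t\|_{\textup{TV}}$ and letting $\underline{U}>0$ be a common lower bound of the kernels $U_{\cdot,\cdot}$, one gets $S'(t)\leq \overline p\,S(t)-\underline{U}\,S(t)^2$, hence $S(t)\leq\max\{S(0),\overline p/\underline{U}\}$ for all $t\geq0$. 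Adding this line closes the gap (and lets you take $M$ independent of $T$, which also streamlines your continuation step); with it, the proof is complete and coincides with the paper's intended argument.
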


If we additionally suppose that for every $x,y\in X$ there exists a density $\kappa(x,y,\cdot)\in L^1$ of measure $k(x,y,dz)$ with respect to Lebesgue measure, then using Radon--Nikodym theorem, we can also prove the following theorem.

\begin{thm}
Under the assumptions of Theorem~\ref{exist1}, if $\mu_0,\nu_0$ have densities $u_0,v_0\in L^1$ with respect to Lebesgue measure, than for every $t\geq 0$ solutions $\mu_t,\nu_t$ of $(\ref{systemzero})$ have densities $u(t,\cdot),v(t,\cdot)\in L^1$. Functions $u(t,z),v(t,z)$ are unique solutions of the following system
\begin{equation*}
\left\{\begin{array}{l}
\begin{aligned}
\frac{\partial }{\partial t}u(t,z)&=\int_X\int_X p\Big(x,y,u(t,\xi)d\xi,v(t,\xi)d\xi\Big) \kappa(x,y,z)u(t,x)v(t,y)dxdy\\
&-\bigg(D_m(z)+\int_X U_{m,m}(z,y)u(t,y)dy+\int_X U_{m,f}(z,y)v(t,y)dy\bigg)u(t,z),
\end{aligned}\vspace*{0.5cm} \\
\begin{aligned}
\frac{\partial }{\partial t}v(t,z)&=\int_X\int_X p\Big(x,y,u(t,\xi)d\xi,v(t,\xi)d\xi\Big) \kappa(x,y,z)u(t,x)v(t,y)dxdy\\
&-\bigg(D_f(z)+\int_X U_{f,m}(z,y)u(t,y)dy+\int_X U_{f,m}(z,y)v(t,y)dy\bigg)v(t,z).
\end{aligned}\end{array}\right.
\end{equation*}
\end{thm}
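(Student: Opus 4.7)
The plan is to use the Duhamel (integrating-factor) representation of the measure-valued system (\ref{systemzero}). Since existence and uniqueness of the measure-valued solution is already granted by Theorem~\ref{exist1}, the real content is (i) that absolute continuity with respect to Lebesgue measure is preserved in time, and (ii) that the resulting densities satisfy the displayed PDE.

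Introduce the scalar death-and-competition rate
\begin{equation*}
d_m(t, z) := D_m(z) + \int_X U_{m,m}(z, y)\, m_t(dy) + \int_X U_{m,f}(z, y)\, f_t(dy)
\end{equation*}
and the measure-valued source
\begin{equation*}
B_m^{(t)}(dz) := \int_X \int_X p(x, y, m_t, f_t)\, k(x, y, dz)\, f_t(dx)\, m_t(dy),
\end{equation*}
so that the first line of (\ref{systemzero}) reads $\frac{d}{dt} m_t(dz) = B_m^{(t)}(dz) - d_m(t, z)\, m_t(dz)$. Multiplying by the integrating factor $\exp\bigl(\int_0^t d_m(s, z)\, ds\bigr)$ and integrating in time yields the mild formulation
\begin{equation*}
m_t(dz) = e^{-\int_0^t d_m(s, z)\, ds}\, m_0(dz) + \int_0^t e^{-\int_s^t d_m(r, z)\, dr}\, B_m^{(s)}(dz)\, ds,
\end{equation*}
and the analogous identity holds for $f_t$.

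Under the hypothesis $k(x, y, dz) = \kappa(x, y, z)\, dz$, the source $B_m^{(s)}$ is absolutely continuous with Lebesgue density
\begin{equation*}
b_m(s, z) = \int_X \int_X p(x, y, m_s, f_s)\, \kappa(x, y, z)\, f_s(dx)\, m_s(dy),
\end{equation*}
while $m_0(dz) = u_0(z)\, dz$ by assumption. Hence the right-hand side of the Duhamel formula is absolutely continuous for every $t \geq 0$, with density
\begin{equation*}
u(t, z) = e^{-\int_0^t d_m(s, z)\, ds}\, u_0(z) + \int_0^t e^{-\int_s^t d_m(r, z)\, dr}\, b_m(s, z)\, ds.
\end{equation*}
Fubini gives $\int_X u(t, z)\, dz = \|m_t\|_{\textup{TV}}$, which is finite by Theorem~\ref{exist1}, so $u(t, \cdot) \in L^1(X)$; the same construction yields $v(t, \cdot) \in L^1(X)$.

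The PDE system is then recovered by differentiating these representation formulas in $t$; the required continuity of $s \mapsto b_m(s, \cdot)$ and $s \mapsto d_m(s, \cdot)$ follows from continuity of $s \mapsto (m_s, f_s)$ in the total-variation norm (Theorem~\ref{exist1}) together with the uniform boundedness and measurability of all coefficients. Uniqueness in $L^1$ is inherited from uniqueness at the measure level: any $L^1$ solution of the PDE system corresponds, via the measures $u(t, z)\, dz$ and $v(t, z)\, dz$, to a solution of (\ref{systemzero}), to which Theorem~\ref{exist1} applies. I expect the main technical subtlety to be a uniform positive lower bound on the denominators $\int p_m(w)\, m_t(dw)$ and $\int p_f(w)\, f_t(dw)$ appearing inside $p(x, y, m_t, f_t)$; these must stay away from zero on $[0, T]$ for $b_m$ to remain integrable, and this uses the positive lower bounds on $p_m, p_f$ combined with the a priori mass estimates already exploited in the proof of Theorem~\ref{exist1}.
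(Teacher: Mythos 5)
Your argument is correct and follows essentially the route the paper indicates: the theorem is stated there without a written proof, with only an appeal to the Radon--Nikodym theorem and to an adaptation of the existence argument, and your Duhamel/mild-formulation reasoning --- absolute continuity of the birth term because $k(x,y,dz)=\kappa(x,y,z)\,dz$, propagation of absolute continuity through the variation-of-constants formula, and uniqueness inherited from the measure-level uniqueness of Theorem~\ref{exist1} --- is the standard way to carry that out. Your closing remark about keeping the denominators $\int_X p_m(w)\,m_t(dw)$ and $\int_X p_f(w)\,f_t(dw)$ bounded away from zero on $[0,T]$ correctly identifies the one genuine technical point, and it is settled by the lower bounds on $p_m,p_f$ together with the crude estimate $M'(t)\geq -CM(t)$ giving $M(t)\geq M_0e^{-Ct}>0$.
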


\subsection{Total number of individuals}
In this chapter we consider a case when all of the rates $D_m,$ $D_f,$  $U_{m,m},$ $U_{m,f},$ $U_{f,m},$ $U_{f,f}$ are constant and positive. We assume that  $p_m,$ $p_f$ are constant, non--negative and $p_m+p_f>0$. Then the system (\ref{systemzero}) has the following form
\begin{equation}\label{sysconst}
\left\{\begin{array}{l}
\begin{aligned}
\frac{d}{dt}m_t(dz)&=\frac{p_fF(t)+p_mM(t)}{2M(t)F(t)}\int_X\int_X k(x,y,dz)f_t(dx)m_t(dy)\\
&-\Big(D_m+U_{m,m}M(t)+U_{m,f}F(t)\Big)m_t(dz),
\end{aligned}\vspace*{0.5cm} \\
\begin{aligned}
\frac{d}{dt}f_t(dz)&=\frac{p_fF(t)+p_mM(t)}{2M(t)F(t)}\int_X\int_X k(x,y,dz)m_t(dx)f_t(dy)\\
&-\Big(D_f+U_{f,m}M(t)+U_{f,f}F(t)\Big)f_t(dz),
\end{aligned}\end{array}\right.
\end{equation}
 where $M(t):=m_t(X)$ and $F(t):=f_t(X)$.
In order to investigate asymptotic properties of total numbers of individuals in male and female subpopulations, let us denote $\lambda(t):=\big(p_f F(t)+p_m M(t)\big)/2.$
Integrating both sides of equations of the system (\ref{sysconst}), we obtain
\begin{equation}\label{systotal}
\left\{\begin{array}{l}
M'(t)=\lambda(t) - \big(D_m + U_{m,m} M(t)+U_{m,f} F(t)\big)M(t), \vspace*{0.2cm} \\
F'(t)=\lambda(t) - \big(D_f + U_{f,m} M(t)+U_{f,f} F(t)\big)F(t).
\end{array}\right.
\end{equation}
The function $\lambda(t)$ can be interpreted as a birth rate of each subpopulations at time~$t$. This rate is identical in males and females due to the Fisher's principle. The form of $\lambda(t)$ in our model generalizes some of the birth rates studied before in literature. For instance some of the birth rates investigated in \cite{fred, good, ken} can be obtained simply by taking $p_m=p_f=1$ and $p_m=1, p_f=0$. However, since our equations contain additional terms, which are responsible for competition, the solutions exhibit asymptotic properties different than exponential growth of above models.  Similar intersexual competition terms were studied in \cite{rose} Section 4. A particular case of system (\ref{systotal}) appeared also in \cite{liu} and the existence of globally asymptotically stable stationary solution was obtained. We provide a result concerning the same type of asymptotic behavior, but in more general setting.

\begin{thm}\label{astotal} Consider solution $\big(M(t),F(t)\big)$ of system $(\ref{systotal})$ with initial condition $(M_0,F_0),$ $M_0,F_0>0$.
Suppose that the inequality
\begin{equation}\label{persis}
\frac{p_m}{D_m}+\frac{p_f}{D_f}>2
\end{equation}
holds. Then there exist unique $\bar M,\bar  F>0$ such that $\big(M(t),F(t)\big)\to (\bar  M, \bar  F)$ exponentially, as $t\to\infty$.
On the contrary, if the inequality
\begin{equation}\label{extinct}
\frac{p_m}{D_m}+\frac{p_f}{D_f}\leq 2,
\end{equation}
holds,  then $\big(M(t),F(t)\big)\to (0,0)$ as $t\to\infty$.
\end{thm}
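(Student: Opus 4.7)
The plan splits according to the hypothesis. For the extinction case (\ref{extinct}), the strategy is a Lyapunov argument with the weighted sum
\[ V(M,F) := \tfrac{p_m}{2D_m}M + \tfrac{p_f}{2D_f}F. \]
Differentiating along (\ref{systotal}) and using $\tfrac{p_m}{2D_m}\cdot D_m = p_m/2$ (and the symmetric identity for $F$), a direct computation yields
\[ V'(t) = \Bigl(\tfrac{1}{2}\bigl(\tfrac{p_m}{D_m}+\tfrac{p_f}{D_f}\bigr) - 1\Bigr)\lambda(t) - Q\bigl(M(t),F(t)\bigr), \]
where $Q \ge 0$ collects all the competition contributions and satisfies $Q \ge c(M^2+F^2)$ for some $c>0$. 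Under (\ref{extinct}) the coefficient of $\lambda$ is non-positive: if the inequality is strict, the linear piece alone gives $V' \le -\varepsilon V$ and hence exponential decay; in the borderline equality case the linear piece vanishes, but $V' \le -c'V^2$ still forces $V(t) = O(1/t) \to 0$. Either way $(M,F)\to(0,0)$.

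For the persistence case (\ref{persis}), I would proceed through dissipativity, permanence, existence/uniqueness of a positive equilibrium, and global convergence. Dissipativity follows from $(M+F)' \le C_1(M+F) - c_2(M+F)^2$. For permanence, the same $V$ now satisfies $V' \ge c_1V - c_2V^2$, and comparison with the logistic equation gives $\liminf V(t) > 0$; to pass from this to $\liminf M, \liminf F > 0$ individually, note that if $M(t_n) \to 0$ along some subsequence, then $F(t_n) \to F^* > 0$ by compactness and permanence of $V$, so $(0,F^*)$ would lie in the (invariant) $\omega$-limit set of the orbit --- impossible, since the backward-time trajectory from $(0,F^*)$ with $F^*>0$ leaves the first quadrant instantly, as $M'|_{(0,F^*)} = p_fF^*/2 > 0$. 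Existence and uniqueness of the positive equilibrium $(\bar M,\bar F)$ are obtained from the two nullclines: each of $\{M'=0\}$ and $\{F'=0\}$ is a quadratic in one of the variables with positive leading coefficient and negative constant term, producing a single smooth monotone branch in the open first quadrant, and these two branches intersect transversally at one point.

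For global convergence I would apply the Bendixson--Dulac criterion with weight $B(M,F) = 1/(MF)$; a direct computation gives
\[ \partial_M(BM') + \partial_F(BF') = -\tfrac{p_f}{2M^2} - \tfrac{p_m}{2F^2} - \tfrac{U_{mm}}{F} - \tfrac{U_{ff}}{M} < 0 \]
on the open first quadrant, ruling out periodic orbits. Combined with dissipativity, permanence, and the uniqueness of $(\bar M,\bar F)$, Poincar\'e--Bendixson forces every interior orbit to converge to $(\bar M,\bar F)$. The Jacobian at the equilibrium has trace $-p_f\bar F/(2\bar M) - p_m\bar M/(2\bar F) - U_{mm}\bar M - U_{ff}\bar F < 0$, and a further algebraic computation yields a positive determinant; linearization then gives local exponential stability, which together with the global convergence promotes the rate to exponential.

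I expect the main obstacle to be the nullcline/Dulac package: since $\partial_F M' = p_f/2 - U_{mf}M$ changes sign, the system is not globally cooperative, and monotone-systems shortcuts are unavailable. The crucial algebraic insight is the choice of Dulac weight $B=1/(MF)$, under which every surviving divergence term retains a definite sign; the nullcline uniqueness then falls out of a parallel monotonicity argument.
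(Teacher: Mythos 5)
Your proposal follows essentially the same route as the paper's proof: the Dulac weight $1/(MF)$ combined with Poincar\'e--Bendixson for global convergence, uniqueness of the positive equilibrium via the two nullclines, and the weighted linear functional $\tfrac{p_m}{D_m}M+\tfrac{p_f}{D_f}F$ both to show the origin is repelling under (\ref{persis}) and to drive extinction under (\ref{extinct}). Two small remarks: your treatment of the borderline case $\tfrac{p_m}{D_m}+\tfrac{p_f}{D_f}=2$ (where only $O(1/t)$ decay is available) is actually more careful than the paper's, and your explicit linearization supplies the exponential rate the paper leaves implicit; on the other hand your claim that each nullcline branch is \emph{monotone} fails in some parameter regimes (the sign of $dM/dF$ along $\{M'=0\}$ flips at $M=p_f/(2U_{m,f})$), though the single-positive-root structure still yields a unique interior intersection, as the paper shows by its asymptote argument.
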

\begin{proof}
Without loss of generality we can assume that $p_f>0$. Suppose that condition (\ref{persis}) holds.
  We show that the following system of polynomial equations
\begin{equation}\label{poly}
\left\{\begin{array}{l}p_mM+p_fF - 2D_mM - 2U_{m,m} M^2-2U_{m,f} FM=0, \vspace*{0.2cm} \\
p_mM+p_fF- 2D_fF - 2U_{f,m} MF-2U_{f,f} F^2=0.\end{array}\right. 
\end{equation}
has unique non--trivial solution $(\bar M,\bar F)$. Denote by $h_1$ and $h_2$ curves given by first and second equation, respectively. Notice, that if $p_m- 2D_m = \frac{U_{m,m}p_f}{U_{m,f}},$ or $p_f-2D_f=\frac{U_{f,f}p_m}{U_{f,m}},$ the system has only one positive solution, which is easy to find explicitly. 
Denote the implicit formulas for both curves $F_1(M)=\frac{p_mM-2D_mM-2U_{m,m}M^2}{2U_{m,f}M-p_f}$ and $M_2(F)=\frac{p_fF-2D_fF-2U_{f,f}F^2}{2U_{f,m}F-p_m}$ and let $A=\frac{p_f}{2U_{m,f}}$, $B=\frac{p_m}{2U_{f,m}}.$ Then
\begin{enumerate}
\item if $p_m- 2D_m > \frac{U_{m,m}p_f}{U_{m,f}},$ and $p_f-2D_f>\frac{U_{f,f}p_m}{U_{f,m}},$ then
$$\lim_{M\nearrow A} F_1(M)=-\infty,\, \lim_{M\searrow A} F_1(M)=+\infty,\, \lim_{F\nearrow B} M_2(F)=-\infty,\, \lim_{F\searrow B}M_2(F)=+\infty,$$
\item if $p_m- 2D_m > \frac{U_{m,m}p_f}{U_{m,f}},$ and $p_f-2D_f<\frac{U_{f,f}p_m}{U_{f,m}},$ then
$$\lim_{M\nearrow A} F_1(M)=-\infty,\, \lim_{M\searrow A} F_1(M)=+\infty,\, \lim_{F\nearrow B} M_2(F)=+\infty,\, \lim_{F\searrow B}M_2(F)=-\infty,$$
\item if $p_m- 2D_m < \frac{U_{m,m}p_f}{U_{m,f}},$ and $p_f-2D_f>\frac{U_{f,f}p_m}{U_{f,m}},$ then
$$\lim_{M\nearrow A} F_1(M)=+\infty,\, \lim_{M\searrow A} F_1(M)=-\infty,\, \lim_{F\nearrow B} M_2(F)=-\infty,\, \lim_{F\searrow B}M_2(F)=+\infty,$$
\item if $p_m- 2D_m < \frac{U_{m,m}p_f}{U_{m,f}},$ and $p_f-2D_f<\frac{U_{f,f}p_m}{U_{f,m}},$ then
$$\lim_{M\nearrow A} F_1(M)=+\infty,\, \lim_{M\searrow A} F_1(M)=-\infty,\, \lim_{F\nearrow B} M_2(F)=+\infty,\, \lim_{F\searrow B}M_2(F)=-\infty.$$
\end{enumerate} 
Moreover one can check that $F=-\frac{U_{m,m}}{U_{m,f}}M+a,$ $M=-\frac{U_{f,f}}{U_{f,m}}F+b$, for some constants $a,b\in\R$ give the formulas for asymptotes of $h_1$ and $h_2$, respectively.  Since in all of the above cases each of $h_1$ and $h_2$ has two different asymptotes, there must be that $h_1$ and $h_2$ are hyperbolas or two straight lines. Since $F=\frac{2D_m-p_m}{p_f} M$, $F=\frac{p_m}{2D_f-p_f} M$ are tangent lines at $(0,0)$ to $h_1$ and $h_2$, respectively, from (\ref{persis}) one can prove that 
$$\frac{p_m}{2D_f-p_f}>\frac{2D_m-p_m}{p_f} ,$$
i.e., $h_2$ lies above $h_1$ in some right neighborhood of $0$ in the following sense: $F_1(M)< F_2(M)$ for small $M>0$ (here $F_2(M)$ is an explicit formula for $F$ derived from the second equation of (\ref{poly})). Now it is easy to check that in every case such two hyperbolas (or two pairs of straight lines) must cross at exactly one positive point (by virtue of Darboux property of continuous functions). 

Consider solution $\big(M(t),F(t)\big)$ of (\ref{systotal}) and let $\rho(M,F)=-1/(MF)$. For every $t>0$
$$\frac{\partial}{\partial M} \big(\rho H_1\big)\big(M(t),F(t)\big)+\frac{\partial}{\partial F} \big(\rho H_2\big)\big(M(t),F(t)\big)\geq \frac{U_{m,m}}{F(t)}+\frac{U_{f,f}}{M(t)}>0,$$
where $H_1$ and $H_2$ are functions from right--hand sides of the first and second equation of system (\ref{systotal}), respectively.
Since solutions of the system are upper--bounded, from Dullac--Bendixon theorem any solution $\big(N(t),F(t)\big)$ tends to one of the stationary points, as $t\to\infty$. In order to finish the proof, we show that $(0,0)$ is retracting.
Denote $\beta(t)=p_m M(t)/D_m+p_f F(t)/D_f$. From (\ref{systotal}) we obtain
\begin{equation}\label{eqtotalhelp}
\beta'(t)=\bigg(\frac{p_m}{D_m}+\frac{p_f}{D_f}-2\bigg)\lambda(t)-C_1p_m^2M^2(t)-2C_2p_mp_fM(t)F(t)-C_3p_f^2F^2(t),
\end{equation}
where $C_1,C_2,C_3>0$ are some constants. Let $C=\max\{C_1,C_2,C_3\}.$ Then
$$
\beta'(t)\geq \lambda(t)\bigg(\frac{p_m}{D_m}+\frac{p_f}{D_f}-2-4C\lambda(t)\bigg).
$$
If $N(t),F(t)$ are small enough that $0<\lambda(t)<\frac{p_m}{4CD_m}+\frac{p_f}{4CD_f}-\frac{1}{2C},$ then $\beta'(t)>0,$ and at least one of the functions $N(t),F(t)$ grows strictly. Thus $(0,0)$ is retracting point.

Suppose that the opposite case holds, i.e., the inequality (\ref{extinct}) is satisfied. Notice that from (\ref{poly}) it follows that
$$\bigg(\frac{p_m}{D_m}+\frac{p_f}{D_f}-2\bigg)\lambda=A M^2+B MF+C F^2,$$
where $\lambda=p_mM/2+p_fF/2$ and $A,B,C>0$ are some constants. Since left--hand side of above is non--positive and right--hand side is non--negative, there is no positive solution, and consequently $M=F=0$ is the only solution to the system.
Moreover, from  (\ref{eqtotalhelp}) it follows that $\beta'(t)\leq -C\beta(t),$ where $C>0$ is some constant. Then $\beta(t)\to 0$ exponentially, and thus  $\big(N(t),F(t)\big)\to (0,0),$ as $t\to\infty$.
\end{proof}

Suppose that (\ref{persis}) holds.
Substituting $m_t(dz)=M(t)\mu_t(dz)$ and $f_t(dz)=F(t)\nu_t(dz)$ in (\ref{sysconst}) and time scaling $t\mapsto \int_0^t \frac{\lambda(s)}{M(s)}ds$ lead to following system
\begin{equation}\label{sysprobmes}
\left\{\begin{array}{l}
\begin{aligned}
\frac{d}{dt}\mu_t(dz)+\mu_t(dz) =\int_X\int_X k(x,y,dz)\mu_t(dx)\nu_t(dy),
\end{aligned}\vspace*{0.5cm} \\
\begin{aligned}
\frac{d}{dt}\nu_t(dz)+A(t)\nu_t(dz) =A(t)\int_X\int_X k(x,y,dz)\mu_t(dx)\nu_t(dy),
\end{aligned}
\end{array}\right.
\end{equation}
where $A(t)=M(t)/F(t)$. Notice, that if $\mu_0,\nu_0$ are probability measures, then for every $t>0$, solutions $\mu_t,\nu_t$ are probability measures as well. We denote by $\mathcal M_{\textrm{Prob}}$ the space of all Borel probability measures on $X$.

\section{Asymptotic stability of trait distribution}
\subsection{Statement of the main result} In this section we assume that $X$ is closed interval in $\R$.
From Theorem~\ref{astotal}, under condition (\ref{persis}), male and female subpopulations sizes stabilize on positive level, i.e., $A(t)\to A$ for some $A>0$ as $t\to\infty.$ The number $A$ is a ratio of male subgroup size to size of female subgroup in stable population. Denote $\mathcal P(\mu,\nu)(dz)=\int_X\int_X k(x,y,dz)\mu(dx)(dy)$. In this section, we study long time behavior of the following system
\begin{equation}\label{sysprobmesconst}
\left\{\begin{array}{l}
\begin{aligned}
\mu_t'+\mu_t =\mathcal P(\mu_t,\nu_t),
\end{aligned}\vspace*{0.2cm} \\
\begin{aligned}
\nu_t'+A\,\nu_t=A\,\mathcal P(\mu_t,\nu_t),
\end{aligned}
\end{array}\right.
\end{equation}
where $A>1$ is some constant. Later on, we compare solutions of initial system with solutions of (\ref{sysprobmesconst}) in order to obtain asymptotic behavior also for solutions of (\ref{sysprobmes}). 

We assume that 
\begin{equation}\label{ksr1}
\int_X |z|k(x,y,dz)\leq a_1+a_2|x|+a_3|y|,
\end{equation}
for some constants $a_1,a_2,a_3>0$, and
\begin{equation}\label{sredniaK}
\int_X zk(x,y,dz)=\frac{x+y}{2}.
\end{equation}
The above condition is a reasonable biological assumption and means that the expected offspring's trait is a mean parental trait.

For any $\gamma\geq1$ and $\alpha\leq\beta$ we introduce 
$\mathcal M_\gamma:=\big\{\mu\in\mathcal M_{\textrm{Prob}}\colon \int_X |x|^\gamma\mu(dx)<\infty\big\}$ and $\mathcal M_{[\alpha,\beta]}:=\big\{\mu\in \mathcal M_1\colon \alpha\leq \int_X x\mu(dx)\leq \beta \big\}.$
For any two measures $\mu,\nu\in \mathcal{M}_1$, we define the \textit{Wasserstein distance} by the formula
\begin{equation}
d(\mu,\nu)=\sup_{f\in\textup{Lip}_1} \int_X f(z)(\mu-\nu)(dz),
\end{equation}
where $\textup{Lip}_1$ is a set of all continuous functions $f\colon X\to \R$ such that $|f(x)-f(y)|\leq |x-y|,$ for any $x,y\in X$ .

Denote by $\mathcal K(x,y,\cdot)$ the cumulative distribution function of measure $k(x,y,\cdot)$, i.e., $\mathcal K(x,y,z)=k\big(x,y,X\cap (-\infty,z]\big).$
The main theorem of the paper is
\begin{thm}\label{mainthm} Fix $\alpha,\beta\in X$, $\alpha\leq \beta$. Suppose that
\begin{enumerate}
\item[(i)] for all $y,z\in X$ the function $\mathcal K(x,y,z)$ is absolutely continuous with respect to $x$ and for every $a,b,y\in X$
$$
\int_X \Big|\frac{\partial}{\partial x}\mathcal K(a,y,z)-\frac{\partial}{\partial x}\mathcal K(b,y,z)\Big|\,dz< 1,
$$
\item[(ii)] there exist constants $\gamma>1$, $C>0$ and $L<1$ such that
$$
\int_X |x|^{\gamma}\mathcal{P}(\mu,\nu)(dx)\le C+L\max\bigg\{\int_X |x|^{\gamma}\mu(dx),\int_X |x|^{\gamma}\nu(dx)\bigg\} .
$$ 
for every $\mu,\nu\in\mathcal M_{\gamma}\cap\mathcal M_{[\alpha,\beta]}$.
\end{enumerate}
Then for every $\mu_0,\nu_0\in\mathcal M_{[\alpha,\beta]}$ there exists unique solution $\mu,\nu\colon [0,\infty)\to\mathcal M_{[\alpha,\beta]}$ of system $(\ref{sysprobmesconst})$ with initial values $\mu_0,\nu_0$. Moreover, there exists unique measure $\mu^*\in\mathcal M_{[\alpha,\beta]}$ such that $\mathcal P(\mu^*,\mu^*)=\mu^*$, and for every initial measures $\mu_0,\nu_0\in\mathcal M_{[\alpha,\beta]}$ corresponding functions $\mu_t,\nu_t$ converge to $\mu^*$ in space $(\mathcal M_{[\alpha,\beta]},d)$, as $t\to\infty$.
\end{thm}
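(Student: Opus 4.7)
The plan follows a four-stage template analogous to the hermaphroditic case \cite{rud_zwo}: mild (Duhamel) reformulation together with the mean identity forced by \eqref{sredniaK}; uniform $\gamma$-th moment control from (ii); existence of $\mu^*$ by compactness and Schauder--Tychonoff; and finally asymptotic stability, for which condition (i) is the crucial ingredient.

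\emph{Mild form and invariant mean.} I would first recast the system in the integral form
\begin{align*}
\mu_t &= e^{-t}\mu_0 + \int_0^t e^{-(t-s)}\mathcal P(\mu_s,\nu_s)\,ds,\\
\nu_t &= e^{-At}\nu_0 + A\int_0^t e^{-A(t-s)}\mathcal P(\mu_s,\nu_s)\,ds;
\end{align*}
since the right-hand sides are convex combinations of probability measures, the probability structure is preserved. Testing against $f(z)=z$ and invoking \eqref{sredniaK} gives the planar linear ODE $\bar\mu_t'=(\bar\nu_t-\bar\mu_t)/2$, $\bar\nu_t'=A(\bar\mu_t-\bar\nu_t)/2$, in which $A\bar\mu_t+\bar\nu_t$ is conserved while $\bar\mu_t-\bar\nu_t=(\bar\mu_0-\bar\nu_0)e^{-(A+1)t/2}$; both means therefore stay in $[\alpha,\beta]$ and converge exponentially to $m_\infty:=(A\bar\mu_0+\bar\nu_0)/(A+1)$. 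Local existence and uniqueness in $C([0,T],\mathcal M_{[\alpha,\beta]})$ follow from a standard Banach fixed-point argument on this mild form using the $d$-continuity of $\mathcal P$ on bounded-moment subsets; continuation yields a global solution.

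\emph{Moments and the stationary state.} With (ii) and \eqref{ksr1}, a Gronwall computation on the mild form gives $\sup_{t\geq 0}\max\{\int|x|^\gamma\mu_t(dx),\int|x|^\gamma\nu_t(dx)\}\leq M:=\max\{G(0),\,C/(1-L)\}$. Hence the orbit remains in the set $K:=\{\mu\in\mathcal M_{[\alpha,\beta]}:\bar\mu=m_\infty,\,\int|x|^\gamma\mu(dx)\leq M\}$, which is convex and --- by uniform tightness from $\gamma>1$ together with closedness in $d$ --- compact in $(\mathcal M_{[\alpha,\beta]},d)$. The map $\mu\mapsto\mathcal P(\mu,\mu)$ preserves $m_\infty$ by \eqref{sredniaK}, preserves the moment bound by (ii), and is $d$-continuous by the continuity assumption on $k$; Schauder--Tychonoff then produces a fixed point $\mu^*\in K$.

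\emph{Asymptotic stability (the main obstacle).} This is the hardest step. The idea is to exploit condition (i), the symmetry $k(x,y,\cdot)=k(y,x,\cdot)$, and the mean identity \eqref{sredniaK} to derive a Wasserstein-contraction-type estimate for $\mathcal P$ on the compact set $K$. For $f\in\textup{Lip}_1$ and $\tilde f(x,y):=\int f(z)k(x,y,dz)$, integration by parts in $z$ gives $\tilde f(a,y)-\tilde f(b,y)=-\int[\mathcal K(a,y,z)-\mathcal K(b,y,z)]f'(z)\,dz$, and differentiating in $x$ produces $\partial_x\tilde f(x,y)=-\int\partial_x\mathcal K(x,y,z)f'(z)\,dz$; condition (i) then controls the $L^1$-variation in $x$ of $\partial_x\mathcal K(\cdot,y,\cdot)$, which, via averaging over $\mu,\nu$, is precisely what is needed to obtain strict contraction of $\mathcal P$ in $d$ on $K$. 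Plugging that estimate into the mild form and subtracting the constant trajectory $(\mu^*,\mu^*)$ (which solves the system because $\mathcal P(\mu^*,\mu^*)=\mu^*$), the quantity $\phi(t):=d(\mu_t,\mu^*)+d(\nu_t,\mu^*)$ satisfies a linear integral inequality forcing $\phi(t)\to 0$ as $t\to\infty$; uniqueness of $\mu^*$ in $K$ is an immediate corollary, as any second stationary measure would produce a constant solution also converging to $\mu^*$. The delicate issue is that (i) bounds only the $L^1$-diameter of $\{\partial_x\mathcal K(x,y,\cdot)\}_x$ rather than supplying a pointwise Lipschitz constant, so extracting genuine contraction requires combining (i) with \eqref{sredniaK} (to pin down the first-order $x$-behaviour of $\tilde f$) and with the symmetry of $k$ (to balance the contributions in the $\mu$ and $\nu$ variables), closely following and adapting the hermaphroditic argument of \cite{rud_zwo}.
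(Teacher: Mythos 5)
Your first two stages (the Duhamel form, the mean system forced by (\ref{sredniaK}), the uniform $\gamma$-moment bound from (ii), and compactness of moment-bounded sets) coincide with what the paper does in Lemmas \ref{lemma2}, \ref{relcomp} and \ref{lem5}, up to one slip: the orbit does \emph{not} stay in your fixed-mean slice $K=\{\bar\mu=m_\infty\}$, since the means only converge to $m_\infty$ exponentially; the invariant compact set must be taken as $\mathcal M_{[\alpha,\beta],\gamma,M}$. Producing the stationary measure by Schauder--Tychonoff on the fixed-mean compact convex set is a legitimate alternative to the paper, which instead extracts $\mu^*$ from the $\omega$-limit set; that difference is harmless.

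The genuine gap is in the stability step. Condition (i) yields only the strict but \emph{non-uniform} estimate $d\big(\mathcal P(\mu^1,\nu^1),\mathcal P(\mu^2,\nu^2)\big)<\max\big\{d(\mu^1,\mu^2),d(\nu^1,\nu^2)\big\}$ (the paper's Lemma \ref{lemma1}), and it is valid only under the equal-means hypotheses $\int_X x\mu^1(dx)=\int_X x\mu^2(dx)$, $\int_X x\nu^1(dx)=\int_X x\nu^2(dx)$; there is no uniform contraction factor $L<1$. Consequently your claim that $\phi(t)=d(\mu_t,\mu^*)+d(\nu_t,\mu^*)$ ``satisfies a linear integral inequality forcing $\phi(t)\to0$'' does not follow: a Gronwall inequality whose decay coefficient is merely positive, and may degenerate as the measures approach one another, gives only that $\phi$ is nonincreasing, not that it vanishes in the limit. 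Moreover, comparing $(\mu_t,\nu_t)$ with the constant trajectory $(\mu^*,\mu^*)$ violates the equal-means hypothesis whenever $\bar\mu_0\neq\bar\nu_0$, because $\bar\mu_t\neq m_\infty$ for finite $t$. The paper closes exactly this gap with a compactness argument of Edelstein type: relative compactness of orbits (Lemma \ref{lem5}) makes $\omega(\mu,\nu)$ a nonempty compact set, invariant under the flow and consisting of measures with a common mean; if it contained two distinct points one could choose a pair maximizing $\max\{d(\mu_1,\mu_2),d(\nu_1,\nu_2)\}$ over $\omega(\mu,\nu)$, pull it back by $S(t)$ inside $\omega(\mu,\nu)$, and contradict the strict decrease (\ref{zwezanie}) established in Lemma \ref{lemma2}; a density argument then extends the conclusion from $\mathcal M_{[\alpha,\beta]}\cap\mathcal M_\gamma$ to all of $\mathcal M_{[\alpha,\beta]}$. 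This $\omega$-limit device is the missing idea; without it, or an equivalent substitute, the convergence claim is not established.
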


The concept of the above theorem and idea of its proof come from similar result for hermaphroditic populations (see \cite{rud_zwo}). Nonetheless, the proofs differ in many details and contain some nontrivial and new elements. For the convenience of the reader, we present the full reasoning.

\subsection{Convergence of measures} In order to investigate asymptotic properties of the solutions, we recall some basic theory concerning convergence of measures. We start with method of computing Wasserstein distance, which can be found in \cite{rud_zwo} as Lemma~1.

\begin{lem}
\label{waslem} 
The  Wasserstein distance between measures
 $\mu,\nu\in \mathcal{M}_{1}$ can be computed by the formula
\begin{equation}
d(\mu,\nu)=\int_X |\Phi(x)|\,dx,
\end{equation}
where $\Phi(z)=(\mu-\nu)\left(X\cap(-\infty,z]\right)$ is a cumulative distribution function of the signed measure $\mu-\nu$.
\end{lem}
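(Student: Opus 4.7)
The plan is to establish $d(\mu,\nu)=\int_X |\Phi(x)|\,dx$ by proving both inequalities, with the principal ingredient being an integration-by-parts identity that converts the supremum defining $d$ into a one-variable integral against $\Phi$.

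First I would prove the key identity
\begin{equation*}
\int_X f(x)(\mu-\nu)(dx)=-\int_X f'(x)\Phi(x)\,dx
\end{equation*}
valid for every $f\in\textup{Lip}_1$. Setting $a=\inf X$, any $1$-Lipschitz $f$ is absolutely continuous with $|f'|\le 1$ almost everywhere and $f(x)=f(a)+\int_a^x f'(t)\,dt$. Since $\mu,\nu$ are probability measures, $(\mu-\nu)(X)=0$, so the constant $f(a)$ contributes nothing. Plugging in and applying Fubini gives
\begin{equation*}
\int_X f(x)(\mu-\nu)(dx)=\int_X f'(t)\,(\mu-\nu)\big(X\cap(t,\infty)\big)\,dt=-\int_X f'(t)\Phi(t)\,dt,
\end{equation*}
where the last equality uses $(\mu-\nu)(X\cap(t,\infty))=(\mu-\nu)(X)-\Phi(t)=-\Phi(t)$. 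The Fubini interchange is legal because the integrand is bounded by $\mathbf{1}$ on the product and $\mu+\nu$ has finite first moment (needed only if $X$ is unbounded).

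With the identity in hand, the upper bound $d(\mu,\nu)\le\int_X|\Phi|\,dx$ is immediate: $|f'|\le 1$ a.e.\ forces $|\int f\,d(\mu-\nu)|=|\int f'\Phi\,dx|\le\int|\Phi|\,dx$, uniformly in $f\in\textup{Lip}_1$. For the matching lower bound I would exhibit a near-optimal test function. Let
\begin{equation*}
f^{*}(x):=-\int_a^x\sgn\big(\Phi(t)\big)\,dt,
\end{equation*}
which is continuous and $1$-Lipschitz since $|\sgn(\Phi)|\le 1$, hence $f^{*}\in\textup{Lip}_1$. Because $(f^{*})'(t)=-\sgn(\Phi(t))$ a.e., the identity yields
\begin{equation*}
\int_X f^{*}\,d(\mu-\nu)=-\int_X (f^{*})'(t)\Phi(t)\,dt=\int_X \sgn(\Phi(t))\Phi(t)\,dt=\int_X|\Phi(t)|\,dt,
\end{equation*}
so $d(\mu,\nu)\ge\int_X|\Phi|\,dx$. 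Combining the two inequalities finishes the proof.

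The only delicate point I anticipate is making the computation legitimate when $X$ is unbounded. I would first verify $\int_X|\Phi|\,dx<\infty$ using $|\Phi(x)|\le (1-F_\mu(x))+(1-F_\nu(x))$ for large positive $x$ and the symmetric bound on the left, together with $\int_c^{\infty}(1-F_\mu(x))\,dx\le\int_X|x|\,\mu(dx)<\infty$, which holds since $\mu,\nu\in\mathcal{M}_1$. The same moment bound ensures $\int|f^{*}|\,d\mu<\infty$ via $|f^{*}(x)|\le|x-a|$, so that all integrals appearing above are absolutely convergent and Fubini applies without reservation.
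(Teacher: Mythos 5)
The paper does not prove this lemma at all --- it is quoted from Lemma~1 of \cite{rud_zwo} --- so there is no internal proof to compare against; your argument is the standard duality computation for the one--dimensional Wasserstein distance and it is correct. The only point to tidy up is your choice of base point $a=\inf X$: when $X$ is unbounded below (e.g.\ $X=\R$ in Section~5) the quantities $f(a)$ and $\int_a^x$ are not defined, so you should anchor at an arbitrary finite $x_0\in X$ instead; since $(\mu-\nu)(X)=0$ the additive constant $f(x_0)$ still drops out and the rest of the argument, including the Fubini justification via the finite first moments and the optimizer $f^*(x)=-\int_{x_0}^x\sgn(\Phi(t))\,dt$, goes through unchanged.
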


Consider probability measures $\mu$ and $\mu_n$, $n\in\N$, on the set $X$. We recall that the sequence $\mu_n$ converges \textit{weakly} (or \textit{in a weak sens}) to $\mu$, if for any continuous and bounded function $f\colon X\to\R$
$$\int_X f(x)\,\mu_n(dx)\to\int_X f(x)\,\mu(dx),$$
as $n\to\infty$.
It is well--known that the convergence in Wasserstein distance implies weak convergence of measures. Moreover, the space of probability Borel measures on any complete metric space is also a complete metric space with the Wasserstein distance (see e.g. \cite{bolley, rach}). The convergence of a sequence $\mu_n$ to $\mu$ in the space $\mathcal{M}_{1,q}:=\big\{\mu\in\mathcal M_1\colon  \int_X x\mu(dx)=q\big\}$ is equivalent to the following condition (see \cite{villani}, Definition 6.7 and Theorem 6.8)
\begin{equation}\tag{C}\label{warrown}
\mu_n\to\mu \mbox{ weakly, as } n\to\infty \quad \mbox{ and } \quad\lim_{R\to\infty} \limsup_{n\to\infty} \int_{X_R} |x|\mu_n(dx)=0,
\end{equation}
where $X_R:=\{x\in X\colon |x|\geq R \}.$ Fix $q\in X$, $\alpha>1$, and $m>0$. 

\begin{lem}\label{relcomp}
Assume that $\alpha,\beta\in X$, $\alpha\leq\beta$, $m>0$ and $\gamma>1$.  Consider the family 
$$\mathcal M_{[\alpha,\beta],\gamma, m}:=\bigg\{\mu\in \mathcal M_1\colon \alpha\leq \int_X x\mu(dx)\leq \beta, \, \int_X |x|^\gamma\mu(dx)\leq m\bigg\}.$$
Then $\mathcal M_{[\alpha,\beta],\gamma, m}$ is relatively compact subset of $(\mathcal M_{[\alpha,\beta]},d).$
\end{lem}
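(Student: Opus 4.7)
The plan is to take an arbitrary sequence $(\mu_n)\subset\mathcal M_{[\alpha,\beta],\gamma,m}$ and extract a subsequence converging to some element of $\mathcal M_{[\alpha,\beta]}$ in the Wasserstein distance $d$. The key ingredient is that the uniform bound on the $\gamma$--th moment with $\gamma>1$ yields uniform integrability of $|x|$, which lets us upgrade weak convergence to convergence in $d$ via Lemma \ref{waslem}.

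First I would establish tightness. By Markov's inequality, for every $n\in\N$ and every $R>0$,
\begin{equation*}
\mu_n(\{|x|\geq R\})\leq R^{-\gamma}\int_X |x|^\gamma\mu_n(dx)\leq m R^{-\gamma},
\end{equation*}
so the family is uniformly tight and Prokhorov's theorem gives a subsequence, still denoted $(\mu_n)$, converging weakly to some probability measure $\mu$. By Fatou's lemma, $\int_X |x|^\gamma\mu(dx)\leq m$; in particular $\mu\in\mathcal M_1$. Next I would verify uniform integrability of $|x|$: for any $R>0$,
\begin{equation*}
\int_{X_R}|x|\,\mu_n(dx)\leq R^{-(\gamma-1)}\int_X|x|^\gamma\mu_n(dx)\leq m\,R^{-(\gamma-1)},
\end{equation*}
which tends to $0$ uniformly in $n$. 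Combined with weak convergence this gives $\int_X x\,\mu_n(dx)\to\int_X x\,\mu(dx)$, and since each first moment lies in $[\alpha,\beta]$ the limit does too; hence $\mu\in\mathcal M_{[\alpha,\beta]}$.

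The main step is to show that the convergence takes place in $(\mathcal M_{[\alpha,\beta]},d)$. Using Lemma~\ref{waslem}, write $d(\mu_n,\mu)=\int_X |\Phi_n(z)|\,dz$ with $\Phi_n(z)=\mu_n(X\cap(-\infty,z])-\mu(X\cap(-\infty,z])$. I would split the integral into a bounded part $\{|z|\leq R\}$ and a tail. For the tail with $z\geq R$, $|\Phi_n(z)|\leq\mu_n((z,\infty))+\mu((z,\infty))$, and by Fubini
\begin{equation*}
\int_R^\infty \mu_n((z,\infty))\,dz=\int_X (y-R)_+\,\mu_n(dy)\leq\int_{X_R}|y|\,\mu_n(dy)\leq m\,R^{-(\gamma-1)},
\end{equation*}
with the symmetric estimate on $z\leq -R$ and the analogous bound for $\mu$. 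Hence the tail contribution is at most $4m R^{-(\gamma-1)}$, uniformly in $n$. On the bounded interval $[-R,R]$, the weak convergence $\mu_n\to\mu$ implies $\Phi_n(z)\to 0$ at every continuity point of the cumulative distribution function of $\mu$; since $\mu$ has at most countably many atoms, this is Lebesgue--almost everywhere. Combined with $|\Phi_n|\leq 2$, the bounded convergence theorem yields $\int_{-R}^R|\Phi_n(z)|\,dz\to 0$. Letting $n\to\infty$ and then $R\to\infty$ gives $d(\mu_n,\mu)\to 0$, completing the proof.

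I expect the main obstacle to be technical rather than conceptual: one has to justify carefully that weak convergence transfers to $L^1$--convergence of the cumulative distribution functions, and here the $\gamma$--th moment bound does the work by simultaneously controlling the tails of $\mu_n$ and $\mu$. A potential subtlety is that the criterion (\ref{warrown}) is formulated for fixed first moment $q$, while the measures in our family have first moments varying in $[\alpha,\beta]$; bypassing this via the direct cdf estimate from Lemma \ref{waslem} is what makes the argument clean.
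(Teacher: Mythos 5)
Your proof is correct, but it follows a genuinely different route from the one in the paper. The paper fixes a subsequence along which the first moments converge to some $E\in[\alpha,\beta]$, then perturbs each $\mu^{p_n}$ by mixing it with a small point mass at $\alpha$ or $\beta$ so that the perturbed measures all have first moment exactly $E$; since the mixing weights tend to $1$, the perturbation is negligible in $d$, and the known compactness criterion (\ref{warrown}) for the fixed--mean space $\mathcal M_{1,E}$ (quoted from \cite{rud_zwo} and \cite{villani}) then yields a $d$--convergent sub--subsequence. You instead bypass the fixed--mean criterion entirely: tightness via Markov's inequality and Prokhorov give a weak limit, the uniform $\gamma$--moment bound gives uniform integrability of $|x|$ (hence convergence of first moments and membership of the limit in $\mathcal M_{[\alpha,\beta]}$), and then you verify $d$--convergence directly through the cdf formula of Lemma~\ref{waslem}, splitting $\int_X|\Phi_n|$ into a compact part (handled by Portmanteau plus bounded convergence) and tails (handled by the estimate $\int_R^\infty\mu_n((z,\infty))\,dz=\int_X(y-R)_+\,\mu_n(dy)\le mR^{-(\gamma-1)}$, uniformly in $n$). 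All of your estimates check out, including the Fatou--type lower semicontinuity needed to transfer the $\gamma$--moment bound to the limit so that the same tail estimate applies to $\mu$. What your argument buys is self--containedness and transparency: it exhibits the limit explicitly as the weak limit and shows exactly how the $\gamma>1$ moment bound controls both tightness and the tail of the Wasserstein integral, and it handles the varying first moments without any auxiliary construction. What the paper's argument buys is brevity given the imported machinery: once condition (\ref{warrown}) is available, only the small mixing trick is needed to reduce to the fixed--mean setting. Either proof is acceptable.
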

\begin{proof}
Fix any sequence $(\mu^n)$ of measures from $\mathcal M_{\alpha,\beta,\gamma, m}$. Since $\alpha\leq \E\mu\leq \beta$ there exists a subsequence $(p_n)$ such that $\lim_n \E\mu^{p_n}= E$ for some $\alpha\leq E \leq \beta.$ Consider sequence of measures $(\bar \mu^{p_n})$ given by 
$$\bar\mu^{p_n}=\left\{\begin{array}{l l}
a_n \mu^{p_n}+(1-a_n)\delta_\beta, & \textrm{ if }\, \E\mu^{p_n}\leq E,\\
a_n \mu^{p_n}+(1-a_n)\delta_\alpha, & \textrm{ if }\,\E\mu^{p_n}> E,
\end{array}\right.$$
where $a_n\in[0,1]$ satisfies $a_n\E\mu^{p_n}+(1-a_n)\beta =E$ (resp. $a_n\E\mu^{p_n}+(1-a_n)\alpha =E$). From $\lim_n \E\mu^{p_n}= E$, it follows that $a_n\to 1$ and $\lim_n d(\mu^{p_n},\bar \mu^{p_n})=0,$ and moreover
$$\int_X |x|^\gamma \bar\mu^{p_n}(dx)\leq a_n\int_X |x|^\gamma\mu^{p_n}(dx)+(1-a_n)\Big(|\alpha|^\gamma+|\beta|^\gamma\Big)\leq m+|\alpha|^\gamma+|\beta|^\gamma<\infty.$$
Consequently, there exist $\mu^*\in\mathcal M_{1,E}$ and some subsequence $(q_n)$ of $(p_n)$ such that $\lim_n d(\bar \mu^{q_n},\mu^*)=0$ (the condition (C) is satisfied, see remarks after Lemma 1 in \cite{rud_zwo}). Finally,
$$d(\mu^{q_n},\mu^*)\leq d(\mu^{q_n},\bar \mu^{q_n})+d(\bar\mu^{q_n},\mu^*)\to 0,$$
as $n\to\infty$.
\end{proof}

\subsection{Proof of the main result} We split the proof of Theorem \ref{mainthm} into a sequence of lemmas.
\begin{lem}\label{lemma1}
Assume that condition \textup{(i)} of Theorem $\ref{mainthm}$ is satisfied. Then
\begin{equation}\label{kontrakcja}
d\big(\mathcal P(\mu^1,\nu^1), \mathcal P(\mu^2,\nu^2)\big)< \max\big\{d(\nu^1, \nu^2), d(\mu^1, \mu^2)\big\}
\end{equation}
for any $\mu^1,\mu^2,\nu^1,\nu^2\in\mathcal M_1$ such that $\int_X x \mu^1(dx)=\int_X x\mu^2(dx)$ and $\int_X x\nu^1(dx)=\int_X x\nu^2(dx).$
\end{lem}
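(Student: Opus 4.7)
The plan is to reduce the Wasserstein distance to an $L^1$ integral of CDFs via Lemma~\ref{waslem}, decompose the signed product measure $\mu^1\otimes\nu^1-\mu^2\otimes\nu^2$ symmetrically, and apply two integrations by parts that turn the equal-first-moments hypothesis into the pointwise strict bound of condition~(i). By Lemma~\ref{waslem},
\[
d\bigl(\mathcal P(\mu^1,\nu^1),\mathcal P(\mu^2,\nu^2)\bigr)=\int_X|\Psi(z)|\,dz,\quad \Psi(z)=\int_X\!\!\int_X \mathcal K(x,y,z)\,d(\mu^1\!\otimes\!\nu^1-\mu^2\!\otimes\!\nu^2)(x,y),
\]
and the identity $\mu^1\otimes\nu^1-\mu^2\otimes\nu^2=(\mu^1-\mu^2)\otimes\bar\nu+\bar\mu\otimes(\nu^1-\nu^2)$, with $\bar\mu=\tfrac12(\mu^1+\mu^2)$ and $\bar\nu=\tfrac12(\nu^1+\nu^2)$ probability measures, splits $\Psi=\Psi_1+\Psi_2$.

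For $\Psi_1$, integration by parts in the $x$ variable -- with $\Phi_\mu(x)=(\mu^1-\mu^2)((-\infty,x]\cap X)$ vanishing at the endpoints of $X$ and $\mathcal K(\cdot,y,z)$ absolutely continuous by~(i) -- gives
\[
\Psi_1(z)=-\int_X\bar\nu(dy)\int_X \partial_x\mathcal K(x,y,z)\,\Phi_\mu(x)\,dx.
\]
The crucial observation is that the equality of first moments $\int x\,d\mu^1=\int x\,d\mu^2$ translates, via Fubini on the bounded interval $X$, to $\int_X\Phi_\mu(x)\,dx=0$, so one may subtract $\partial_x\mathcal K(b,y,z)$ from the integrand for an arbitrary $b\in X$ without changing $\Psi_1(z)$. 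Taking absolute values and using Fubini in $z$,
\[
\int_X|\Psi_1(z)|\,dz\le\int_X\bar\nu(dy)\int_X|\Phi_\mu(x)|\left(\int_X\bigl|\partial_x\mathcal K(x,y,z)-\partial_x\mathcal K(b,y,z)\bigr|\,dz\right)dx,
\]
and condition~(i) makes the innermost integral strictly less than $1$ pointwise; since $|\Phi_\mu|$ is positive on a set of positive Lebesgue measure whenever $d(\mu^1,\mu^2)>0$, the resulting bound $\int_X|\Psi_1|\,dz<d(\mu^1,\mu^2)$ is strict. The symmetry $k(x,y,\cdot)=k(y,x,\cdot)$ yields $\mathcal K(x,y,z)=\mathcal K(y,x,z)$ and transfers (i) to the $y$-partial, so the same argument applied to $\Psi_2$ gives $\int_X|\Psi_2|\,dz<d(\nu^1,\nu^2)$.

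The delicate step is upgrading these two individual bounds to the $\max$-form asserted in the lemma, since the triangle inequality only yields the weaker $\int|\Psi|\,dz<d(\mu^1,\mu^2)+d(\nu^1,\nu^2)$. One way to reach $<\max\{d(\mu^1,\mu^2),d(\nu^1,\nu^2)\}$ is to show that the multiplicative constants $L_1$ (from the $\Psi_1$-estimate) and $L_2$ (from the $\Psi_2$-estimate) can be chosen with $L_1+L_2\le 1$, since then $L_1d(\mu^1,\mu^2)+L_2d(\nu^1,\nu^2)\le (L_1+L_2)\max\le\max$. Exploiting compactness of $X$ (Section~4.1) to promote the pointwise strict bound of~(i) to a uniform contraction, together with the mean-preservation identity $\int z\,k(x,y,dz)=(x+y)/2$ to balance the two pieces (it forces $\int \partial_x\mathcal K(x,y,z)\,dz=-\tfrac12$, and the analogous identity for $\partial_y\mathcal K$), should enforce $L_1+L_2\le 1$. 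This redistribution of weight between the $\mu$- and $\nu$-contributions is precisely where the two-sex argument must genuinely depart from the hermaphroditic case of~\cite{rud_zwo}.
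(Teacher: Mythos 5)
Your setup --- reduction to CDFs via Lemma~\ref{waslem}, the bilinear decomposition of $\mu^1\otimes\nu^1-\mu^2\otimes\nu^2$, integration by parts, and using $\int_X\Phi_\mu(x)\,dx=0$ to bring in condition~(i) --- matches the paper's strategy, but you stop exactly where the real work is, and the repair you sketch for the final step is not the right one. Comparing $\partial_x\mathcal K(x,y,z)$ against a single fixed reference point $b$ only yields $\int_X|\Psi_1|\,dz<d(\mu^1,\mu^2)$ and $\int_X|\Psi_2|\,dz<d(\nu^1,\nu^2)$, hence the sum $d(\mu^1,\mu^2)+d(\nu^1,\nu^2)$, which is a factor of $2$ short of the claimed $\max$. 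The missing idea is to split $\Phi_\mu=\Phi^+-\Phi^-$ into its positive and negative parts: the hypothesis $\int_X\Phi_\mu=0$ gives $\int_X\Phi^+=\int_X\Phi^-=\tfrac12\int_X|\Phi_\mu|=\tfrac12 d(\mu^1,\mu^2)$, so after normalizing $\Phi^\pm$ to probability densities $\rho^\pm$ one writes
\begin{equation*}
\int_X\partial_x\mathcal K(x,y,z)\,\Phi_\mu(x)\,dx=\tfrac12 d(\mu^1,\mu^2)\int_X\!\!\int_X\bigl(\partial_x\mathcal K(a,y,z)-\partial_x\mathcal K(b,y,z)\bigr)\rho^+(a)\rho^-(b)\,da\,db,
\end{equation*}
and condition~(i), integrated against $\rho^+\otimes\rho^-$, bounds the $z$-integral of the modulus strictly by $\tfrac12 d(\mu^1,\mu^2)$. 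The same holds for the $\nu$-piece, and $\tfrac12\bigl(d(\mu^1,\mu^2)+d(\nu^1,\nu^2)\bigr)\le\max\{d(\mu^1,\mu^2),d(\nu^1,\nu^2)\}$ finishes the proof. So the equal-first-moments hypothesis is used not merely to permit subtracting a constant, but to guarantee that the positive and negative parts of $\Phi_\mu$ carry \emph{equal} mass, each worth half the Wasserstein distance.

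Your proposed fix cannot be made to work as stated: $X$ need not be compact (Section~4.1 only assumes $X$ is a closed interval, and Example~5.1 takes $X=\R$), condition~(i) is a pointwise strict inequality with no uniform gap below $1$, and the identity $\int_X z\,k(x,y,dz)=(x+y)/2$ does not interact with the constants $L_1,L_2$ in the way you hope --- indeed $\int_X\partial_x\mathcal K(x,y,z)\,dz=-\tfrac12$ concerns the mean of the offspring kernel, not the operator norm of the map $\Phi_\mu\mapsto\Psi_1$. No additional compactness or uniformity is needed once the $\Phi^\pm$ decomposition is in place.
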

\begin{proof}
Denote by $\Phi$ the cumulative distribution function of signed measure $\mu_1-\mu_2$, and consider $\Phi^+(x):=\max\{0,\Phi(x)\}$ and $\Phi^-(x):=\max\{0,-\Phi(x)\}.$
The assumption $\int_X x \mu^1(dx)=\int_X x\mu^2(dx)$ implies
$$\int_X \Phi^+(x)dx=\int_X \Phi^-(x)dx=\frac{1}{2}\int_X |\Phi(x)|dx.$$
Since $\Phi^+$ and $\Phi^-$ are non--negative and have the same integral condition (i) implies
$$\int_X\bigg|\int_X \frac{\partial}{\partial x}\mathcal K(x,y,z)\Phi^+(x)dx - \int_X \frac{\partial}{\partial x}\mathcal K(x,y,z)\Phi^-(x)dx \bigg|dz<\int_X \Phi^+(x)dx.$$
Integrating by parts we obtain
$$\int_X\mathcal K(x,y,z)\Phi(dx)=-\int_X \frac{\partial}{\partial x}\mathcal K(x,y,z)\Phi(x)dx$$
and consequently by Lemma \ref{waslem}
\begin{equation}
\int_X\bigg|\int_X\mathcal K(x,y,z)\Phi(dx) \bigg|dz<\frac{1}{2}\int_X |\Phi(x)|dx=\frac{1}{2}d(\mu^1,\mu^2).
\end{equation}
In the same way we prove that if $\Psi$ is the cumulative distribution function of signed measure $\nu^1-\nu^2$, then
\begin{equation}
\int_X\bigg|\int_X\mathcal K(x,y,z)\Psi(dy) \bigg|dz<\frac{1}{2}\int_X |\Psi(y)|dy=\frac{1}{2}d(\nu^1,\nu^2).
\end{equation}
Now, since
\begin{multline*}
\mathcal P(\mu^1,\nu^1)- \mathcal P(\mu^2,\nu^2)\\
=\int_X\int_X k(x,y,dz) \Big(\mu^1(dx)(\nu^1-\nu^2)(dy)+\nu^2(dy)(\mu^1-\mu^2)(dx)\Big),
\end{multline*}
we finally obtain
\begin{multline*}
d\big(\mathcal P(\mu^1,\nu^1), \mathcal P(\mu^2,\nu^2)\big)\leq \int_X\int_X\bigg|\int_X \mathcal K(x,y,dz)\Psi(dy)\bigg|dz\mu^1(dx)\\
+\int_X\int_X\bigg|\int_X \mathcal K(x,y,dz)\Phi(dx)\bigg|dz\nu^2(dx)<\frac{ d(\nu^1, \nu^2) + d(\mu^1, \mu^2)}{2},
\end{multline*}
which implies (\ref{kontrakcja}).
\end{proof}

\begin{lem}\label{lemma2} Assume that $(\ref{kontrakcja})$ is satisfied for all $\mu^1,\mu^2,\nu^1,\nu^2\in\mathcal M_1$ such that $\int_X x \mu^1(dx)=\int_X x\mu^2(dx)$ and $\int_X x\nu^1(dx)=\int_X x\nu^2(dx).$ Fix $\mu^1_0,\mu^2_0,\nu^1_0,\nu^2_0\in\mathcal M_1$ satisfying $\int_X x \mu_0^1(dx)=\int_X x\mu^2_0(dx)$ and $\int_X x\nu^1_0(dx)=\int_X x\nu^2_0(dx).$ Denote by $(\mu^1_t,\nu^1_t)$ and $(\mu^2_t,\nu^2_t)$ solutions of system $(\ref{sysprobmesconst})$ with initial conditions $(\mu^1_0,\nu^1_0)$ and $(\mu^2_0,\nu^2_0)$, respectively. Then $\mu^1_t,\nu^1_t,\mu^2_t,\nu^2_t\in\mathcal M_{[\alpha,\beta]}$ for some $\alpha,\beta\in X$ and any $t>0$, and
\begin{equation}\label{zwezanie}
\max\big\{d(\nu^1_s, \nu^2_s), d(\mu^1_s, \mu^2_s)\big\}>\max\big\{d(\nu^1_t, \nu^2_t), d(\mu^1_t, \mu^2_t)\big\}
\end{equation}
for $0\leq s<t\leq T$ provided $\mu^1_T\neq \mu^2_T$ and $\nu^1_T\neq \nu^2_T.$ 
\end{lem}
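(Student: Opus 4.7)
The plan is to combine the variation-of-constants representation of (\ref{sysprobmesconst}) with the strict contraction (\ref{kontrakcja}) from Lemma \ref{lemma1}, after first verifying that first moments of the two solutions stay matched at every time so that Lemma \ref{lemma1} is applicable. Strict decrease of $R(t):=\max(d(\mu^1_t,\mu^2_t),d(\nu^1_t,\nu^2_t))$ will then be obtained from an upper-Dini-derivative estimate.

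The first step is to check moment preservation and containment in $\mathcal M_{[\alpha,\beta]}$. Testing (\ref{sysprobmesconst}) against $\phi(z)=z$ and using hypothesis (\ref{sredniaK}), which yields $\int z\,\mathcal P(\mu,\nu)(dz)=\tfrac12\bigl(\int x\mu(dx)+\int x\nu(dx)\bigr)$, the scalar first moments $m(t):=\int x\mu_t(dx)$ and $n(t):=\int x\nu_t(dx)$ satisfy the linear ODE $m'=(n-m)/2$, $n'=A(m-n)/2$. The combination $n+Am$ is conserved while $m-n$ decays exponentially, so both $m(t)$ and $n(t)$ remain in $[\min(m_0,n_0),\max(m_0,n_0)]$. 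Since $(\mu^1_0,\nu^1_0)$ and $(\mu^2_0,\nu^2_0)$ share first moments, uniqueness for the scalar ODE forces $\int x\mu^1_t(dx)=\int x\mu^2_t(dx)$ and $\int x\nu^1_t(dx)=\int x\nu^2_t(dx)$ for every $t\ge 0$. Taking $\alpha=\min(m_0,n_0)$ and $\beta=\max(m_0,n_0)$ gives $\mu^i_t,\nu^i_t\in\mathcal M_{[\alpha,\beta]}$, and Lemma \ref{lemma1} applies at each $t$, yielding the strict inequality $P(t):=d(\mathcal P(\mu^1_t,\nu^1_t),\mathcal P(\mu^2_t,\nu^2_t))<R(t)$ whenever $R(t)>0$.

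Next I would pass to the Duhamel representation. Treating each equation of (\ref{sysprobmesconst}) as linear in the unknown measure with source $\mathcal P(\mu^i_r,\nu^i_r)$ gives
\begin{equation*}
\mu^i_t=e^{-t}\mu^i_0+\int_0^t e^{-(t-r)}\mathcal P(\mu^i_r,\nu^i_r)\,dr,\qquad \nu^i_t=e^{-At}\nu^i_0+A\int_0^t e^{-A(t-r)}\mathcal P(\mu^i_r,\nu^i_r)\,dr.
\end{equation*}
Subtracting the two solutions of each type, testing against $f\in\textup{Lip}_1$ and taking $\sup_f$ (using sublinearity of the supremum and interchanging it with the integral from above) yields, for $0\le s\le t$,
\begin{align*}
D^\mu(t) &\le e^{-(t-s)}D^\mu(s)+\int_s^t e^{-(t-r)}P(r)\,dr,\\
D^\nu(t) &\le e^{-A(t-s)}D^\nu(s)+A\int_s^t e^{-A(t-r)}P(r)\,dr,
\end{align*}
where $D^\mu(t):=d(\mu^1_t,\mu^2_t)$ and $D^\nu(t):=d(\nu^1_t,\nu^2_t)$. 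These bounds also show that $D^\mu$, $D^\nu$, and hence $R$ and $P$, are continuous in $t$.

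Finally I would deduce strict decrease of $R$ using upper Dini derivatives. Sending $t-s\to 0^+$ in the above bounds gives $D^+D^\mu(t)\le -D^\mu(t)+P(t)$ and $D^+D^\nu(t)\le -AD^\nu(t)+AP(t)$. At a point $t$ where $D^\mu(t)>D^\nu(t)$, continuity forces $R\equiv D^\mu$ locally, so $D^+R(t)=D^+D^\mu(t)\le P(t)-R(t)<0$ by Lemma \ref{lemma1}; the reverse case is symmetric and gives $D^+R(t)\le A(P(t)-R(t))<0$; at tie points $D^\mu(t)=D^\nu(t)$ the elementary inequality $D^+\max\le\max D^+$ yields $D^+R(t)\le\max(D^+D^\mu(t),D^+D^\nu(t))<0$ by the same arguments. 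Thus $D^+R(t)<0$ whenever $R(t)>0$, so $R$ is strictly decreasing on its positivity set. The hypothesis $\mu^1_T\ne\mu^2_T,\,\nu^1_T\ne\nu^2_T$ forces $R(T)>0$, and uniqueness of solutions to (\ref{sysprobmesconst}) rules out $R$ vanishing anywhere on $[0,T]$ (as this would propagate forward and contradict $R(T)>0$), giving (\ref{zwezanie}). I expect the main obstacle to be the Dini-derivative bookkeeping at tie points where $D^\mu(t)=D^\nu(t)$, and verifying that the strictness in Lemma \ref{lemma1} is sharp enough to survive the loss of strictness inherent in passing to the pointwise maximum.
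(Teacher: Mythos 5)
Your proposal is correct in outline and coincides with the paper's proof in its first two stages: the same first--moment computation via (\ref{sredniaK}) yielding the linear system for $m(t),n(t)$, conservation of $Am+n$, monotone trapping of the means in $[\min(m_0,n_0),\max(m_0,n_0)]$, and the same Duhamel representation leading to the integral inequalities with the strict contraction of Lemma \ref{lemma1} fed into the source term. Where you diverge is the endgame. The paper partitions $[0,T]$ into subintervals on which the sign of $d(\mu^1_t,\mu^2_t)-d(\nu^1_t,\nu^2_t)$ is constant and applies Gronwall on each piece, chaining the strict inequalities through overlapping endpoints; you instead differentiate, bounding the upper right Dini derivative of $R(t)=\max\{d(\mu^1_t,\mu^2_t),d(\nu^1_t,\nu^2_t)\}$ and handling tie points with $D^+\max\le\max D^+$. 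Your route has a genuine advantage: it dispenses with the paper's implicit assumption that such a finite (or locally finite) sign partition exists, which a general continuous difference need not admit, and it makes explicit the role of uniqueness in excluding $R=0$ on $[0,T)$, a point the paper leaves tacit when it writes the strict inequality (\ref{h1}). The price is the one assertion you should not wave through: you need $P(t)=d\big(\mathcal P(\mu^1_t,\nu^1_t),\mathcal P(\mu^2_t,\nu^2_t)\big)$ to be right upper semicontinuous in order to pass from the integral inequality to $D^+D^\mu(t)\le -D^\mu(t)+P(t)$, and this does \emph{not} follow ``from the above bounds'': Lemma \ref{lemma1} compares $\mathcal P$ only at pairs of measures with \emph{equal} first moments, whereas $\mu_t$ and $\mu_s$ generally have different means, so continuity of $t\mapsto\mathcal P(\mu_t,\nu_t)$ in the Wasserstein metric requires a separate argument (for instance via total--variation continuity of the solution flow together with the uniform moment bounds guaranteeing that TV convergence upgrades to $d$--convergence, or by substituting $P(r)<R(r)$ before any limit is taken and reverting to the Gronwall form). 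With that point repaired, your proof is complete and, in my view, slightly cleaner than the printed one.
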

\begin{proof}
Taking the mean value of both sides of equations in (\ref{sysprobmesconst}), from condition (\ref{sredniaK}) we obtain
\begin{equation}\label{meansys}
\left\{\begin{array}{l}m'(t)=\frac{1}{2}\big(n(t)-m(t)\big), \vspace*{0.2cm}
\\ n'(t)=\frac{A}{2}\big(m(t)-n(t)\big).\end{array}\right.
\end{equation}
where $m(t)=\int_X x\mu_t(dx)$ and  $n(t)=\int_X x\nu_t(dx)$. From above system we obtain $m(t)-n(t)=(m_0-n_0)e^{-(1+A)t/2}.$
Thus, if $m_0\geq n_0$ (resp. $m_0<n_0$), then from (\ref{meansys}) function $m(t)$ decreases (resp. increases) and $n(t)$ increases (resp. decreases), so $m(0)\geq m(t)\geq n(t)\geq n(0)$ (resp. $m(0)\leq m(t)\leq n(t)\leq n(0)$). Consequently, $\mu_t,\nu_t\in\mathcal M_{[\alpha,\beta]}$ for any $t>0$, where $\alpha:=\min\{m(0),n(0)\}$ and $\beta:=\max\{m(0),n(0)\}$.

Fix $s,t$ such that $0\leq s<t\leq T$. Every solution of system (\ref{sysprobmesconst}) is of the following form
\begin{equation}
\left\{\begin{array}{l}\mu_t=e^{s-t}\mu_s+\int_s^t e^{r-t}\mathcal P (\mu_r,\nu_r)dr, \vspace*{0.2cm}\\
\nu_t=e^{A(s-t)}\nu_s+A\int_s^t e^{A(r-t)}\mathcal P (\nu_r,\mu_r)dr.
\end{array}\right.
\end{equation}
From Lemma (\ref{lemma1}) we obtain
\begin{equation}\label{h1}
\left\{\begin{array}{l}e^t d(\mu^1_t,\mu^2_t)< e^{s}d(\mu^1_s,\mu^2_s)+\int_s^t e^{r}\max\big\{d(\nu^1_r,\nu^2_r),d(\mu^1_r,\mu^2_r)\big\}dr,\vspace*{0.2cm} \\
e^{At}d(\nu^1_t,\nu^2_t)< e^{As}d(\nu^1_s,\nu^2_s)+A\int_s^t e^{Ar}\max\big\{d(\nu^1_r,\nu^2_r),d(\mu^1_r,\mu^2_r)\big\}dr.
\end{array}\right.
\end{equation}
We divide the interval $[0,T]$ into subintervals $I_n:=[t_n,t_{n+1}]$ such that the sign of the difference $d(\mu^1_t,\mu^2_t)-d(\nu^1_t,\nu^2_t)$ is fixed for every $t\in I_n$. Consider any such interval $I_n$ and suppose for example that $d(\mu^1_t,\mu^2_t)\leq d(\nu^1_t,\nu^2_t)$ for every $t\in I_n$.
From the second inequality of (\ref{h1}) we obtain for $s,t\in I_n$
\begin{multline}
e^{At}\max\big\{d(\nu^1_t,\nu^2_t), d(\mu^1_t,\mu^2_t)\big\} \\
<e^{As}\max\big\{d(\nu^1_s,\nu^2_s), d(\mu^1_s,\mu^2_s)\big\}+A\int_s^t e^{Ar}\max\big\{d(\nu^1_r,\nu^2_r),d(\mu^1_r,\mu^2_r)\big\}dr,
\end{multline}
and from Gronwall inequality we obtain (\ref{zwezanie}) for $t,s\in I_n$. If in any interval the inequality is reversed, i.e. $d(\mu^1_t,\mu^2_t)\geq d(\nu^1_t,\nu^2_t)$ for $t\in I_n$, then we use the first inequality of (\ref{h1}) and again from Gronwall inequality we obtain (\ref{zwezanie}). Since intersection of intervals $I_n$ and $I_{n+1}$ is nonempty, (\ref{zwezanie}) holds for any $0\leq s<t\leq T$.
\end{proof}

\begin{lem}\label{lem5}
Assume that condition $(ii)$ from Theorem $\ref{mainthm}$ is satisfied. Then for every pair of measures $\mu_0,\nu_0\in\mathcal M_\gamma$ the orbits $\mathcal O(\mu_0), \mathcal O(\nu_0)$ are relatively compact subsets of $\mathcal M_{[\alpha,\beta]}$. Moreover, $\textup{cl}\,\mathcal O(\mu_0), \textup{cl}\,\mathcal O(\nu_0)\subset\mathcal M_{[\alpha,\beta]} \cap\mathcal M_\gamma,$ where $\textup{cl}$ is closure in $(\mathcal M_{[\alpha,\beta]},d).$
\end{lem}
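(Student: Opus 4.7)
The plan is to obtain a uniform--in--$t$ bound on the $\gamma$--th moments of $\mu_t$ and $\nu_t$, then place both orbits inside the compact family supplied by Lemma~\ref{relcomp}, and finally read off the closure statement from Fatou's lemma.

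First I would rewrite (\ref{sysprobmesconst}) in mild (Duhamel) form,
\begin{equation*}
\mu_t = e^{-t}\mu_0 + \int_0^t e^{r-t}\mathcal P(\mu_r,\nu_r)\,dr,\qquad \nu_t = e^{-At}\nu_0 + A\int_0^t e^{A(r-t)}\mathcal P(\mu_r,\nu_r)\,dr,
\end{equation*}
integrate $|x|^\gamma$ against both sides, and apply condition~(ii) of Theorem~\ref{mainthm}. Setting $M_\mu(t):=\int_X|x|^\gamma\mu_t(dx)$, $M_\nu(t):=\int_X|x|^\gamma\nu_t(dx)$, and $\bar M_T:=\sup_{0\leq t\leq T}\max\{M_\mu(t),M_\nu(t)\}$, the elementary bounds $\int_0^t e^{r-t}\,dr\leq 1$ and $A\int_0^t e^{A(r-t)}\,dr\leq 1$ together with (ii) yield
\begin{equation*}
M_\mu(t),\; M_\nu(t)\ \leq\ \max\{M_\mu(0),M_\nu(0)\}+C+L\bar M_T\qquad\text{for all }0\leq t\leq T.
\end{equation*}
Taking the supremum over $t\in[0,T]$ and exploiting $L<1$ closes the estimate and gives $\bar M_T\leq m:=(\max\{M_\mu(0),M_\nu(0)\}+C)/(1-L)$, a bound \emph{independent of} $T$.

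Combined with Lemma~\ref{lemma2}, which already guarantees $\mu_t,\nu_t\in\mathcal M_{[\alpha,\beta]}$ for all $t\geq 0$, this moment bound places both orbits inside $\mathcal M_{[\alpha,\beta],\gamma,m}$, so Lemma~\ref{relcomp} delivers relative compactness in $(\mathcal M_{[\alpha,\beta]},d)$. For the closure assertion, any $\mu^*\in\textup{cl}\,\mathcal O(\mu_0)$ is a Wasserstein limit of some sequence $(\mu_{t_n})$ in the orbit; since Wasserstein convergence implies weak convergence, Fatou's lemma applied to $x\mapsto|x|^\gamma$ yields $\int_X|x|^\gamma\mu^*(dx)\leq\liminf_n M_\mu(t_n)\leq m$, hence $\mu^*\in\mathcal M_\gamma$; the same reasoning works for $\nu_0$, and membership in $\mathcal M_{[\alpha,\beta]}$ is automatic because the closure is taken inside that space.

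The main point requiring care is logical rather than technical: condition~(ii) is hypothesized only for measures in $\mathcal M_\gamma\cap\mathcal M_{[\alpha,\beta]}$, so invoking it along the trajectory presupposes \emph{a priori} finiteness of $M_\mu(r),M_\nu(r)$ on $[0,T]$. I would resolve this by a standard continuation argument: fix $T>0$, let $T^*\in(0,T]$ be the supremum of times $\tau$ such that $M_\mu,M_\nu$ remain finite on $[0,\tau]$, and observe that the closed estimate above forbids blow--up as $t\nearrow T^*$; hence $T^*=T$, and the uniform bound, and with it the conclusion of the lemma, follows.
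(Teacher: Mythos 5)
Your moment estimate is exactly the one the paper uses: write the solution in mild form, integrate $|x|^\gamma$, apply condition (ii), use $\int_0^t e^{r-t}dr\le 1$ and $A\int_0^t e^{A(r-t)}dr\le 1$, and close the inequality with $L<1$ to get a $T$--independent bound $m$; combined with Lemma~\ref{lemma2} and Lemma~\ref{relcomp} this gives relative compactness, and your Fatou argument for the closure statement is fine (and more explicit than the paper, which leaves that part implicit).

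The gap is in the step you yourself flagged. Your continuation argument is circular at the base case: you set $T^*\in(0,T]$ to be the supremum of times up to which $M_\mu,M_\nu$ are finite, but nothing you have written shows that this set of times contains any $\tau>0$. Condition (ii) is vacuous unless $\mu_r,\nu_r\in\mathcal M_\gamma$, so the Duhamel formula cannot be used to propagate finiteness of the $\gamma$--moment forward in time until you already know it holds on the interval in question; and even granting $T^*>0$, ``no blow--up as $t\nearrow T^*$'' only gives boundedness on $[0,T^*)$ --- to conclude $T^*=T$ you would need an openness step, i.e.\ local propagation of finiteness past $T^*$, which is the same unproved assertion again. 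The paper breaks this circle by running the moment bound through the \emph{construction} of the solution rather than through the solution itself: it observes that $Y=C\big([0,T],\mathcal M_{[\alpha,\beta],\gamma,M}\times\mathcal M_{[\alpha,\beta],\gamma,M}\big)$ is a closed subset of $C\big([0,T],\mathcal M_{[\alpha,\beta]}\times\mathcal M_{[\alpha,\beta]}\big)$ and that the contraction $\Lambda$ defining the solution maps $Y$ into itself once $C+LM\le M$ and $M\ge m_0$, so the unique fixed point --- the solution --- automatically lies in $Y$. Equivalently, you could bound the $\gamma$--moments of the Picard iterates by induction (each iterate satisfies your estimate with the previous one on the right) and pass to the limit with Fatou. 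Replace your continuation step by either of these and the proof is complete; the rest coincides with the paper's argument.
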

\begin{proof}
Fix $\mu_0,\nu_0\in\mathcal M_\gamma$ such that $\int_X |x|^\gamma \mu_0(dx), \int_X |x|^\gamma \nu_o(dx)\leq m_0$ for some $m_0>0$. Since form Lemma \ref{lemma2} we obtain $\alpha\leq\int_Xx\mu_t(dx),\int_Xx\nu_t(dx)\leq \beta$ for some $\alpha,\beta\in X$ and every $t>0$, then the relative compactness of orbits follows from Lemma \ref{relcomp} 
provided we prove the following upper--bounds
\begin{equation}
\int_X |x|^\gamma \mu_t(dx), \int_X |x|^\gamma \nu_t(dx)\leq M,
\end{equation}
for some $M\geq m_0$ and any $t>0$. 
Notice that the set $Y:=C\Big([0,T], \mathcal{M}_{ [\alpha,\beta],\gamma, M}\times\mathcal{M}_{ [\alpha,\beta],\gamma, M}\Big)$ is a closed subset of $C\Big([0,T], \mathcal{M}_{[\alpha,\beta]}\times \mathcal{M}_{[\alpha,\beta]}\Big)$, and map $\Lambda(\mu,\nu)_t=\big(\Lambda^1(\mu,\nu)_t,\Lambda^2(\mu,\nu)_t\big)$, where
$\Lambda^1(\mu,\nu)_t=e^{-t}\mu_0+\int_0^t e^{r-t}\mathcal P (\mu_r,\nu_r)dr$ and 
$\Lambda^2(\mu,\nu)_t=e^{-At}\nu_0+A\int_0^t e^{A(r-t)}\mathcal P (\mu_r,\nu_r)dr$
is contraction for sufficiently small $T>0$, whose unique fixed point is $t\mapsto(\mu_t,\nu_t)$. We will show that the set $Y$ is invariant with respect to $\Lambda$, i.e., $\Lambda(Y)\subset Y$ for some constant $M>0$.
We calculate
\begin{multline*}
\int_X |x|^\gamma \Lambda^1(\mu,\nu)_t(dx)=e^{-t}\int_X |x|^\gamma\mu_0(dx)+\int_0^t e^{r-t}\int_X|x|^\gamma\mathcal P(\mu_r,\nu_r)dr\\
\leq e^{-t}\int_X |x|^\gamma\mu_0(dx)+\int_0^t e^{r-t}\bigg(C+L\max\bigg\{\int_X|x|^\gamma\mu_r(dx),\int_X|x|^\gamma\nu_r(dx)\bigg\}\bigg)dr\\
\leq e^{-t}\int_X |x|^\gamma\mu_0(dx)+\int_0^t e^{r-t}\Big(C+LM\Big)dr\leq M,
\end{multline*}
for $M$ such big that $C+LM\leq M.$ In the same way, we show that $\int_X |x|^\gamma \Lambda^2(\mu,\nu)_t(dx)\leq M$ for some constant $M>0$. Consequently, orbits are relatively compact.
\end{proof}

Consider family $\big(S(t)\big)_{t\geq 0}$ of transformations of $\mathcal M_{[\alpha,\beta]}\times\mathcal M_{[\alpha,\beta]}$ given by the formula $S(t)(\mu_0,\nu_0)=(S_1(t)\mu_0,S_2(t)\nu_0)=(\mu_t,\nu_t),$ where $(\mu_t,\nu_t)$ is the solution of system (\ref{sysprobmesconst}) with initial condition $(\mu_0,\nu_0)$. Consider $\omega$--limit set for $\mu,\nu\in\mathcal M_{[\alpha,\beta]},$ i.e.,
$$\omega(\mu,\nu)=\Big\{(\bar\mu,\bar\nu)\colon (\bar\mu,\bar\nu)=\lim_{n\to\infty} (\mu_{t_n},\nu_{t_n}) \textrm{ for some sequence } (t_n)_{n\in\N}\textrm{ s.t. }t_n\to\infty \Big\}.$$

\begin{proof}[Proof of Theorem \ref{mainthm}]
Take measures $\mu,\nu\in\mathcal M_{[\alpha,\beta]}\cap\mathcal M_{\gamma}$. From Lemma \ref{lem5} the orbits $\mathcal O(\mu),\mathcal O(\nu)$ are relatively compact in $\mathcal M_{[\alpha,\beta]}$. Consequently $\omega(\mu,\nu)$ is nonempty and compact set. Moreover, for $t>0$ $S(t)(\omega(\mu,\nu))=\omega(\mu,\nu).$ Suppose that $\omega(\mu,\nu)$ has more than one element. Then we can find $(\mu_1,\nu_1)$ and $(\mu_2,\nu_2)$ which maximize the function $\max\big\{d(\mu_1,\mu_2),d(\nu_1,\nu_2)\big\}.$ For any $t>0$ there exist $(\bar\mu_1,\bar\nu_1)$ and $(\bar\mu_2,\bar\nu_2)$ such that $S(t)(\bar\mu_1,\bar\nu_1)=(\mu_1,\nu_1)$ and $S(t)(\bar\mu_2,\bar\nu_2)=(\mu_2,\nu_2).$ From condition (i), Lemma \ref{lemma1} and Lemma \ref{lemma2} we obtain
\begin{multline}\label{zwww}
\max\big\{d(\mu_1,\mu_2),d(\nu_1,\nu_2)\big\}
=\max\big\{d\big(S_1(t)(\bar\mu_1,\bar\nu_1),S_1(t)(\bar\mu_2,\bar\nu_2)\big),\\
d\big(S_2(t)(\bar\mu_1,\bar\nu_1),S_2(t)(\bar\mu_2,\bar\nu_2)\big)\big\}<\max\big\{d(\bar \mu_1,\bar \mu_2),d(\bar \nu_1,\bar\nu_2)\big\}.
\end{multline}
Inequality (\ref{zwww}) contradicts the definition of $(\mu_1,\nu_1)$ and $(\mu_2,\nu_2)$. Hence $\omega(\mu,\nu)=\{(\mu^*,\nu^*)\},$ and $S(t)(\mu^*,\nu^*)=(\mu^*,\nu^*)$ for every $t>0$. Consequently,  $\mu^*=\mathcal P(\mu^*,\nu^*)=\nu^*$. According to Lemma \ref{lemma1} operator $\mathcal P$ has only one fixed point $(\mu^*,\mu^*),$ so the limit $\lim_{t\to\infty} S(t)(\mu,\nu)$ does not depend on $\mu,\nu\in\mathcal M_{[\alpha,\beta]}\cap\mathcal M_{\gamma}$. Consider now any measures $\mu,\nu\in\mathcal M_{[\alpha,\beta]}.$ Since the set $\mathcal M_{[\alpha,\beta]}\cap\mathcal M_{\gamma}$ is dense in $\mathcal M_{[\alpha,\beta]}$, for every $\varepsilon>0$ there exists $\bar\mu,\bar\nu\in\mathcal M_{[\alpha,\beta]}\cap\mathcal M_{\gamma}$ such that $d(\mu,\bar\mu),d(\nu,\bar\nu)<\varepsilon$. Since $\lim_{t\to\infty} S(t)(\bar \mu,\bar\nu)=(\mu^*,\mu^*)$ there exists $t_\varepsilon$ such that $d\big(S_1(t)(\bar\mu,\bar\nu),\mu^*\big)<\varepsilon$ and $d\big(S_2(t)(\bar\mu,\bar\nu),\mu^*\big)<\varepsilon$ for every $t>t_\varepsilon$. Since from Lemma \ref{lemma2} $S_1(t)$ are contractions we obtain
$$d\big(S_1(t)(\mu,\nu),\mu^*\big)\leq d\big(S_1(t)(\mu,\nu), S_1(t)(\bar\mu,\bar\nu)\big)+d\big(S_1(t)(\bar\mu,\bar\nu),\mu^*\big)<2\varepsilon,$$
and similarly $d\big(S_2(t)(\mu,\nu),\mu^*\big)<2\varepsilon$ for $t>t_\varepsilon$, which completes the proof.
\end{proof}
\subsection{Corollaries and further theorems on stability}
We start with investigation on the mean value of the limiting distribution $\mu^*$.
\begin{col} Under assumptions of Theorem~\ref{mainthm},
\begin{equation}\label{limitingmean}
\int_X x\mu^*(dx)=\frac{A\int_Xx\mu_0(dx)+\int_Xx\nu_0(dy)}{A+1}.
\end{equation}
\end{col}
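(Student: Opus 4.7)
The plan is to exploit the conservation law already implicit in the mean-value ODE derived in the proof of Lemma~\ref{lemma2}. Setting $m(t):=\int_X x\mu_t(dx)$ and $n(t):=\int_X x\nu_t(dx)$, assumption (\ref{sredniaK}) yields
$$\int_X z\,\mathcal P(\mu_t,\nu_t)(dz)=\int_X\int_X\frac{x+y}{2}\mu_t(dx)\nu_t(dy)=\frac{m(t)+n(t)}{2},$$
so integrating both equations of (\ref{sysprobmesconst}) against $x$ reproduces the linear system
$$m'(t)=\tfrac12\bigl(n(t)-m(t)\bigr),\qquad n'(t)=\tfrac{A}{2}\bigl(m(t)-n(t)\bigr).$$

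The key observation is that $\frac{d}{dt}\bigl(Am(t)+n(t)\bigr)=\tfrac{A}{2}(n-m)+\tfrac{A}{2}(m-n)=0$, so the quantity $Am(t)+n(t)$ is a first integral and equals its initial value $Am_0+n_0$ for every $t\ge 0$, where $m_0=\int_X x\mu_0(dx)$ and $n_0=\int_X x\nu_0(dx)$.

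It remains to pass to the limit. By Theorem~\ref{mainthm}, $\mu_t,\nu_t\to\mu^*$ in the Wasserstein distance $d$. The function $f(x)=x$ belongs to $\mathrm{Lip}_1$ on the closed interval $X\subset\mathbb R$, so $|m(t)-\int_X x\mu^*(dx)|\le d(\mu_t,\mu^*)\to 0$ and analogously $n(t)\to\int_X x\mu^*(dx)$. Taking $t\to\infty$ in the conservation identity gives $(A+1)\int_X x\mu^*(dx)=Am_0+n_0$, which is exactly (\ref{limitingmean}).

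The only subtlety worth flagging is the justification that Wasserstein-$d$ convergence transfers to the first moment. This is automatic here because $X$ is a closed interval and $f(x)=x$ is $1$-Lipschitz, so it is an admissible test function in the definition of $d$ restricted to $\mathcal M_1$; no separate uniform integrability argument is needed beyond the invariance of $\mathcal M_{[\alpha,\beta]}$ already established in Lemma~\ref{lemma2}. Everything else is algebra.
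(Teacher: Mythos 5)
Your proposal is correct and follows essentially the same route as the paper: both derive the linear system for the means $m(t),n(t)$ from (\ref{sredniaK}), observe that $Am(t)+n(t)$ is conserved, and pass to the limit using the convergence $\mu_t,\nu_t\to\mu^*$. Your extra remark that the first moment is controlled by the Wasserstein distance because $f(x)=x$ is $1$-Lipschitz is a sound justification of a step the paper leaves implicit.
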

\begin{proof}
Denote $\bar x:=\int_X x\mu^*(dx)$, $m(t):=\int_X x\mu_t(dx)$ and $n(t):=\int_X x\nu_t(dx)$. Since $\mu_t,\nu_t\to\mu^*$ and $\mu_t,\nu_t\in\mathcal M_{[\alpha,\beta]}$, we have also $m(t), n(t)\to \bar x$ as $t\to\infty$. From (\ref{meansys}) it follows that $\frac{d}{dt}\Big(Am(t)+n(t)\Big)=0.$ Consequently, $$Am(0)+n(0)=\lim_{t\to\infty}\Big(Am(t)+n(t)\Big)=(A+1)\bar x,$$
which gives formula (\ref{limitingmean}).
\end{proof}

Now we proceed to a result on asymptotic stability of solutions in stronger convergence. The proof of following result can be adapted from proof of Theorem~4 in \cite{rud_zwo}.
\begin{col}\label{strongconv}
Assume that the measure $k(x,y,dz)$ has bounded and continuous density with respect to Lebesgue measure, and suppose that assumptions of Theorem~\ref{mainthm} are satisfied. Then the stationary measure $\mu^*$ has continuous and bounded density $u^*$ with respect to Lebesgue measure. Moreover, for every $\mu_0,\nu_0\in\mathcal M_{[\alpha,\beta]}$, corresponding solutions $\mu_t,\nu_t$ of $(\ref{sysprobmesconst})$ can be written in the form 
$\mu_t=e^{-t}\mu_0+\bar \mu_t,$ $\nu_t=e^{-At}\nu_0+\bar \nu_t,$ where $\bar\mu_t, \bar\nu_t$ are absolute continuous measures with respect to Lebesgue measure, whose densities are continuous and bounded and converge to $u^*$ uniformly, as $t\to\infty$.
\end{col}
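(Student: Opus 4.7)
The plan is to apply variation of constants to system (\ref{sysprobmesconst}) and then read off the required structure and convergence from the smoothing action of the convolution with $\kappa$. The explicit representations
$$\mu_t = e^{-t}\mu_0 + \int_0^t e^{s-t}\mathcal P(\mu_s,\nu_s)\,ds, \qquad \nu_t = e^{-At}\nu_0 + A\int_0^t e^{A(s-t)}\mathcal P(\mu_s,\nu_s)\,ds$$
identify the measures $\bar\mu_t$ and $\bar\nu_t$ of the statement. Because $k(x,y,dz)=\kappa(x,y,z)\,dz$ with $\kappa$ jointly continuous and bounded by some $\bar\kappa$, and because $\mu_s,\nu_s$ are probability measures, $\mathcal P(\mu_s,\nu_s)$ is automatically absolutely continuous with density $u_s^{\mathcal P}(z)=\int_X\int_X \kappa(x,y,z)\mu_s(dx)\nu_s(dy)$, bounded by $\bar\kappa$ and continuous in $z$ by dominated convergence. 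Fubini's theorem then provides a continuous, bounded density $\bar u_t(z)=\int_0^t e^{s-t}u_s^{\mathcal P}(z)\,ds$ for $\bar\mu_t$, and an analogous one for $\bar\nu_t$. Applied to the fixed--point identity $\mu^*=\mathcal P(\mu^*,\mu^*)$ from Theorem~\ref{mainthm}, the same computation shows that $\mu^*$ has the continuous bounded density $u^*(z)=\int_X\int_X \kappa(x,y,z)\mu^*(dx)\mu^*(dy)$.

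The second step reduces everything to a single uniform--convergence statement for $u_s^{\mathcal P}$. Using $\int_0^t e^{s-t}\,ds=1-e^{-t}$ one rewrites
$$\bar u_t(z)-u^*(z)=\int_0^t e^{s-t}\bigl(u_s^{\mathcal P}(z)-u^*(z)\bigr)\,ds-e^{-t}u^*(z).$$
Given $\varepsilon>0$, choose $T$ so that $\sup_{s\geq T,\,z\in X}|u_s^{\mathcal P}(z)-u^*(z)|<\varepsilon$; the tail integral is then at most $\varepsilon$, while the initial piece and $e^{-t}u^*(z)$ together are $O(\bar\kappa\, e^{T-t})$, which vanishes as $t\to\infty$. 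An identical computation with the kernel $Ae^{A(s-t)}$ takes care of $\bar\nu_t$. Thus the whole proof collapses to showing
$$\sup_{z\in X}\bigl|u_s^{\mathcal P}(z)-u^*(z)\bigr|\longrightarrow 0 \quad\text{as } s\to\infty.$$

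The uniform--in--$z$ convergence of $u_s^{\mathcal P}$ to $u^*$ is the main obstacle and the only genuinely new ingredient. Using the splitting
$$u_s^{\mathcal P}(z)-u^*(z)=\int\!\!\int \kappa(x,y,z)\,\mu_s(dx)(\nu_s-\mu^*)(dy)+\int\!\!\int \kappa(x,y,z)(\mu_s-\mu^*)(dx)\,\mu^*(dy),$$
Theorem~\ref{mainthm} yields pointwise convergence in $z$ via the weak convergence $\mu_s,\nu_s\to\mu^*$. To upgrade to uniform convergence I would combine two ingredients: tightness of $\{\mu_s\}\cup\{\nu_s\}$, which follows from the uniform $\gamma$--moment bound established in the proof of Lemma~\ref{lem5} and allows truncation of the $x,y$ integrations to a compact set with an error independent of $s$ and $z$; and uniform continuity of $\kappa$ on compact subsets of $X^3$, which makes the family $\{u_s^{\mathcal P}\}$ equicontinuous in $z$ on compacta. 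Arzel\`a--Ascoli together with the uniform bound $\bar\kappa$ then upgrades pointwise to uniform convergence. This is essentially the computation carried out for hermaphrodites in \cite{rud_zwo}, Theorem~4; the only modification required here is to propagate the two different exponential factors $e^{-t}$ and $e^{-At}$ through the variation-of-constants identities, as recorded in the first step above.
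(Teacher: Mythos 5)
The paper itself gives no proof of this corollary --- it defers entirely to Theorem~4 of \cite{rud_zwo} --- so there is no in-text argument to compare against. Your architecture is certainly the intended one: the variation-of-constants identities are exactly those used in the proof of Lemma~\ref{lem5}, the Fubini step producing the bounded continuous densities $\bar u_t(z)=\int_0^t e^{s-t}u_s^{\mathcal P}(z)\,ds$ (and its analogue with kernel $Ae^{A(s-t)}$) and $u^*(z)=\int_X\int_X\kappa(x,y,z)\mu^*(dx)\mu^*(dy)$ is correct, and the reduction of the whole statement to $\sup_{z\in X}|u_s^{\mathcal P}(z)-u^*(z)|\to 0$ via $\bar u_t-u^*=\int_0^t e^{s-t}(u_s^{\mathcal P}-u^*)\,ds-e^{-t}u^*$ is clean and right.

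The gap is in the step you yourself identify as the crux. Pointwise convergence in $z$, tightness of $\{\mu_s,\nu_s\}$, and equicontinuity of $z\mapsto u_s^{\mathcal P}(z)$ on compacta, combined via Arzel\`a--Ascoli, yield uniform convergence of $u_s^{\mathcal P}$ to $u^*$ only on \emph{compact subsets} of $X$; the corollary claims uniform convergence on $X$, and in both of the paper's examples $X$ is unbounded. What is actually needed is $\sup_{z\in X}\big|\int_X\int_X\kappa(x,y,z)\,d(\mu_s\otimes\nu_s-\mu^*\otimes\mu^*)\big|\to 0$, i.e.\ convergence in the integral probability metric generated by the family $\{\kappa(\cdot,\cdot,z)\colon z\in X\}$. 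Weak (or Wasserstein) convergence implies this only if that family is uniformly bounded \emph{and} uniformly equicontinuous in $(x,y)$ on compacta, uniformly over $z$ --- a property that does not follow from $\kappa$ being bounded and continuous. (Take $\kappa(x,y,z)=h\big(z-\tfrac{x+y}{2}\big)$ with $h$ a bounded, positive, continuous probability density containing ever narrower spikes as $z\to\infty$: then $\sup_z|h*\rho_n(z)-h*\rho(z)|$ need not vanish even when $d(\rho_n,\rho)\to 0$.) So your step three proves less than the statement asserts; to close it you must either import an extra regularity hypothesis of this uniform-equicontinuity type, or use a different mechanism for the supremum over $z$ --- e.g.\ a bootstrap exploiting that $\mu_s,\nu_s$ are themselves absolutely continuous up to an $O(e^{-s})$ singular part, giving $|u_s^{\mathcal P}(z)-u^*(z)|\le\bar\kappa\big(\|\bar u_s-u^*\|_{L^1}+\|\bar v_s-u^*\|_{L^1}\big)+O(e^{-s})$ uniformly in $z$ and reducing matters to total-variation convergence, which in turn must still be established. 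A minor additional point: the $\gamma$-moment bound from Lemma~\ref{lem5} that you invoke for tightness is only available for $\mu_0,\nu_0\in\mathcal M_\gamma$, whereas the corollary allows arbitrary $\mu_0,\nu_0\in\mathcal M_{[\alpha,\beta]}$; tightness should instead be drawn from the relative compactness of the $d$-convergent family $\{\mu_s,\nu_s\}$.
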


Until now, we studied asymptotic properties of simpler system (\ref{sysprobmesconst}) whose all coefficients are constant. Now we investigate asymptotic properties of solutions to the initial system (\ref{sysprobmes}) by comparing them with proper solutions of (\ref{sysprobmesconst}). Similar idea and techniques were previously used in \cite{rudmac}.

\begin{thm}\label{finalthm}
Suppose that $(\ref{persis})$ is satisfied and conditions \textup{(i)}, \textup{(ii)} of Theorem~\ref{mainthm} hold with $\alpha=\beta$. Then there exists $\mu^*\in\mathcal M_{1,\alpha}$ such that for any $\mu_0,\nu_0\in\mathcal M_{1,\alpha}$ coordinates of solution $(\mu_t,\nu_t)$ of system $(\ref{sysprobmes})$ with initial value $(\mu_0,\nu_0)$ converge to $\mu^*$ in $\mathcal M_{1,\alpha}$ as $t\to\infty$.
\end{thm}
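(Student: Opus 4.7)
The plan is to exploit the exponential convergence of $A(t)$ to a constant to reduce the analysis of the non-autonomous system (\ref{sysprobmes}) to Theorem~\ref{mainthm} applied to the limiting autonomous system (\ref{sysprobmesconst}). By Theorem~\ref{astotal} and (\ref{persis}), $A(t) = M(t)/F(t)$ converges exponentially to some $A \in (0,\infty)$. I first check that $\mathcal M_{1,\alpha}$ is invariant under (\ref{sysprobmes}): taking means in the system and using (\ref{sredniaK}), $m(t) = \int_X x\,\mu_t(dx)$ and $n(t) = \int_X x\,\nu_t(dx)$ satisfy $m' = (n-m)/2$ and $n' = A(t)(m-n)/2$, so $m-n$ decays exponentially; with $\alpha = \beta$ we have $m(0) = n(0) = \alpha$, which forces $m(t) = n(t) = \alpha$ for all $t \ge 0$. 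Theorem~\ref{mainthm} applied to (\ref{sysprobmesconst}) with this limiting $A$ (and $\alpha = \beta$) then provides the candidate fixed point $\mu^* \in \mathcal M_{1,\alpha}$ with $\mathcal P(\mu^*, \mu^*) = \mu^*$, together with the associated contracting semiflow $S^A(\cdot)$ whose orbits all converge to $(\mu^*, \mu^*)$. Relative compactness of the orbits of (\ref{sysprobmes}) in $\mathcal M_{1,\alpha}$ follows by a direct adaptation of Lemma~\ref{lem5}, since boundedness of $A(t)$ preserves the fixed-point argument with the $\gamma$-moment bound from condition (ii).

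The core technical step is an asymptotic-autonomy estimate. For fixed $T > 0$ and $t_0 \ge 0$, I compare $(\mu_{t_0+s}, \nu_{t_0+s})$ with $(\tilde\mu_s, \tilde\nu_s) := S^A(s)(\mu_{t_0}, \nu_{t_0})$ via Duhamel. The $\mu$-equation is identical in both systems, while the $\nu$-equation picks up the perturbation $(A(t_0+r) - A)\bigl[\mathcal P(\tilde\mu_r, \tilde\nu_r) - \tilde\nu_r\bigr]$. Combining the quantitative contraction bound $d(\mathcal P(\mu^1, \nu^1), \mathcal P(\mu^2, \nu^2)) \le \tfrac12\bigl(d(\mu^1,\mu^2) + d(\nu^1,\nu^2)\bigr)$ extracted from the proof of Lemma~\ref{lemma1}, the uniform boundedness of orbits (so that $d(\mathcal P(\tilde\mu_r, \tilde\nu_r), \tilde\nu_r)$ is bounded), and Gronwall yields
\begin{equation*}
\sup_{s \in [0,T]} \max\bigl\{d(\mu_{t_0+s}, \tilde\mu_s),\, d(\nu_{t_0+s}, \tilde\nu_s)\bigr\} \le C(T)\,\sup_{r \in [0,T]}|A(t_0+r) - A|,
\end{equation*}
whose right-hand side tends to $0$ as $t_0 \to \infty$ by exponential decay of $A(\cdot)-A$.

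With this in hand I follow the $\omega$-limit strategy of the proof of Theorem~\ref{mainthm}. The set $\omega(\mu_0, \nu_0)$ of subsequential limits of $(\mu_{t_n}, \nu_{t_n})$ with $t_n \to \infty$ is non-empty and compact by the relative compactness of orbits. Applying the asymptotic-autonomy estimate to sequences $(\mu_{t_n+s}, \nu_{t_n+s})$ and, together with relative compactness, to pre-images $(\mu_{t_n-s}, \nu_{t_n-s})$, one checks that $\omega$ is both forward- and backward-invariant under $S^A(s)$ for every $s > 0$. The strict-contraction argument from the proof of Theorem~\ref{mainthm} then applies almost verbatim: if two distinct elements of $\omega$ maximised $\max\{d(\mu_1,\mu_2), d(\nu_1,\nu_2)\}$, their $S^A(s)$-pre-images in $\omega$ would strictly exceed this by Lemma~\ref{lemma2}. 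Hence $\omega = \{(\mu^*,\mu^*)\}$, which is exactly convergence of $(\mu_t, \nu_t)$ to $(\mu^*, \mu^*)$ in $(\mathcal M_{1,\alpha}, d)$.

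The main obstacle is the backward $S^A$-invariance of $\omega$: unlike the autonomous case it does not follow directly from the flow property, and requires combining a subsequence extraction along the relatively compact orbit with the asymptotic-autonomy bound applied on backward windows $[t_n - s, t_n]$. Forward invariance and the Gronwall computation supporting the asymptotic-autonomy estimate are the more routine parts of the argument.
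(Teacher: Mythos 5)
Your proposal is correct in outline, but it takes a genuinely different route from the paper. The paper's proof makes a single \emph{global-in-time} comparison: it restarts the autonomous system (\ref{sysprobmesconst}) at a late time $s$ from the state $(\mu_s,\nu_s)$ of the non-autonomous solution, and the interval-splitting Gronwall argument (the same device as in Lemma~\ref{lemma2}) yields $\max\{d(\nu^1_t,\nu^2_t),d(\mu^1_t,\mu^2_t)\}<C\int_s^t|A(w)-A|\,dw$ with a constant independent of $t$; since $|A(t)-A|$ is integrable (exponential decay from Theorem~\ref{astotal}), the right-hand side is at most $Cae^{-bs}/b$ uniformly in $t>s$, and a triangle inequality with Theorem~\ref{mainthm} applied to the restarted autonomous solution finishes the proof --- no compactness of the perturbed orbits, no $\omega$-limit sets, and general initial data in $\mathcal M_{1,\alpha}$ are covered because Theorem~\ref{mainthm} already handles them. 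You instead prove only a finite-window estimate with a $T$-dependent Gronwall constant and then run the asymptotically-autonomous-semiflow machinery: relative compactness of the non-autonomous orbits (your adaptation of Lemma~\ref{lem5} does go through since $A(t)>0$ is bounded), forward and backward $S^A$-invariance of $\omega(\mu_0,\nu_0)$, and the strict-contraction argument of Theorem~\ref{mainthm}. The backward-invariance step you flag as the main obstacle does work exactly as you sketch (subsequence extraction along the compact orbit on the window $[t_n-s,t_n]$ plus continuity of $S^A(s)$). Two remarks on the trade-off: your route needs one extra step you do not spell out, namely a density/non-expansiveness argument to pass from initial data in $\mathcal M_{1,\alpha}\cap\mathcal M_\gamma$ (where the compactness lives) to arbitrary $\mu_0,\nu_0\in\mathcal M_{1,\alpha}$, mirroring the end of the proof of Theorem~\ref{mainthm}; on the other hand, your argument only uses $A(t)\to A$, whereas the paper's uniform-in-$t$ bound genuinely relies on integrability of $|A(t)-A|$, so your approach would survive perturbations decaying too slowly to be integrable. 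Both proofs are valid; the paper's is shorter, yours is more robust.
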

\begin{proof}
Fix $s\geq 0$ and let $(\mu^1_t,\nu^1_t)$ and $(\mu^2_t,\nu^2_t)$ be solutions of (\ref{sysprobmesconst}) and (\ref{sysprobmes}) with the same initial condition $(\mu_s,\nu_s)$.
Then  $\int_X x\mu^1_t(dx)=\int_X x\mu^2_t(dx)$ and $\int_X x\nu^1_t(dx)=\int_X x\nu^2_t(dx)$ for any $t\geq s$, and consequently from (\ref{sysprobmes}), (\ref{sysprobmesconst}) and (\ref{kontrakcja}) it follows that
\begin{equation}\label{hh}
\left\{\begin{array}{l}e^t d(\mu^1_t,\mu^2_t)\!<\!e^s d(\mu^1_s,\mu^2_s)\!+\!\int_s^t e^{r}\max\big\{d(\nu^1_r,\nu^2_r),d(\mu^1_r,\mu^2_r)\big\}dr,\vspace*{0.2cm} \\
e^{At}d(\nu^1_t,\nu^2_t)\!<\!e^{As} d(\nu^1_s,\nu^2_s)\!+\!A\int_s^t e^{Ar}\max\big\{d(\nu^1_r,\nu^2_r),d(\mu^1_r,\mu^2_r)\big\}dr\!+\!e^{At}G(s,t),
\end{array}\right.
\end{equation}
where $G(s,t):=c\big|e^{-\int_s^t A(w)dw}-e^{A(s-t)}\big|+c\int_s^t \big|A(r)e^{-\int_r^t A(w)dw}-Ae^{A(r-t)}\big|dr$ and $c>0$ is a constant such that $d\big(\nu_r^2,0\big),  d\big(\mathcal P(\mu^2_r,\nu^2_r),0\big)<c$  for every $r>0$ (such constant $c$ exists, because $\nu_r^2,\mathcal P(\mu^2_r,\nu^2_r)\in\mathcal M_1$ for every $r>0$ due to assumption (\ref{ksr1})). 

We divide the interval $[0,\infty)$ into a sequence of subintevals $I_n$ of lengths $|I_n|\leq 1$ such that the sign of the difference $d(\mu^1_r,\mu^2_r)-d(\nu^1_r,\nu^2_r)$ is fixed for every $r\in I_n$. Fix $n\in\N$.  From appropriate inequality from (\ref{hh}), by Gronwall lemma we obtain for $s_n,t_n\in I_n$ 
\begin{equation}\label{gr}
\max\big\{d(\nu^1_{t_n},\nu^2_{t_n}),d(\mu^1_{t_n},\mu^2_{t_n})\big\}\!<\!\max\big\{d(\nu^1_{s_n},\nu^2_{s_n}),d(\mu^1_{s_n},\mu^2_{s_n})\big\}\!+\!H(s_n,t_n),
\end{equation}
where $H(s,t)=e^{A(t-s)}G(s,t)$. Notice that for any $s_n,t_n\in I_n,$ $s_n\leq t_n$
\begin{multline*}
H(s_n,t_n)\!\leq \!c\Big|e^{-\int_{s_n}^{t_n} \big(A(w)-A\big)dw}-1\Big|\!+\!c\int_{s_n}^{t_n} \Big|A(r)-A\Big|dr\!+\!cA\int_{s_n}^{t_n}\Big|e^{-\int_r^{t_n} \big(A(w)-A\big)dw}-1\Big|dr\\
\leq 2c \int_{s_n}^{t_n} \Big|A(w)-A\Big|dw+cA\int_{s_n}^{t_n}\int_r^{t_n}\Big|A(w)-A\big|dwdr\leq C\int_{s_n}^{t_n} \Big|A(w)-A\Big|dw
\end{multline*}
where $C=c\big(2+A\big)$. Since $\mu^1_s=\mu^2_s=\mu_s$ and $\nu^1_s=\nu^2_s=\nu_s$, from above inequality and (\ref{gr}) we obtain
\begin{equation}
\max\big\{d(\nu^1_t,\nu^2_t),d(\mu^1_t,\mu^2_t)\big\}<C\int_s^t \Big|A(w)-A\Big|dw.
\end{equation}
Notice that since $\big(M(t),F(t)\big)$ converges to $(\bar M,\bar F)$ exponentially, as $t\to\infty$, also $A(t)=M(t)/F(t)$ tends  to $A=\bar M/\bar F$ exponentially as well, i.e., there exists constants $a,b>0$ such that $|A(t)-A|\leq ae^{-bt}$ for all $t\geq 0$. 

Fix $\varepsilon>0$ and take $s>0$ such that $Ce^{-bs}/b<\varepsilon/4$. Let $(\mu_t,\nu_t)$ be solution of (\ref{sysprobmes}) with initial value $(\mu_0,\nu_0)$ and $(\bar \mu_t,\bar \nu_t)$ be solution of (\ref{sysprobmesconst}) such that  $(\bar \mu_s,\bar \nu_s)=(\mu_s,\nu_s)$.  Then for a large enough $t>s$ that $\max\big\{d(\mu^*,\bar\mu_t),d(\mu^*,\bar\nu_t)\big\}\leq \varepsilon/2$
\begin{multline*}
\max\big\{d(\nu_t,\mu^*),d(\mu_t,\mu^*)\big\}<\max\big\{d(\nu_t,\bar \nu_t),d(\mu_t,\bar \mu_t)\big\}+\max\big\{d(\mu^*,\bar\mu_t),d(\mu^*,\bar\nu_t)\big\}\\
\leq \frac{C}{b}\big(e^{-bs}-e^{-bt}\big)+\max\big\{d(\mu^*,\bar\mu_t),d(\mu^*,\bar\nu_t)\big\}\leq \varepsilon,
\end{multline*}
which completes the proof.
\end{proof}

Combining Corollary~\ref{strongconv} and Theorem~\ref{finalthm}, one can deduce the following

\begin{col}
Assume that the measure $k(x,y,dz)$ has bounded and continuous density with respect to Lebesgue measure and suppose that assumptions of Theorem~\ref{finalthm} are satisfied. If $u_0,v_0$ are bounded and continuous densities of initial measures $\mu_0,\nu_0$ of solutions $\mu_t,\nu_t$ to $(\ref{sysprobmes})$, then the densities of the measures $\mu_t,\nu_t$ converge uniformly to continuous and bounded density $u^*$ of stationary measure $\mu^*$, as $t\to\infty$.
\end{col}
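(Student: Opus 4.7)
The plan is to combine the uniform density convergence furnished by Corollary~\ref{strongconv} for the autonomous system (\ref{sysprobmesconst}) with the Wasserstein convergence for the non-autonomous system (\ref{sysprobmes}) from Theorem~\ref{finalthm}, mediated by the smoothing of $\mathcal{P}$ that follows from the bounded continuity of $\kappa$. Rewriting (\ref{sysprobmes}) in mild form,
\[
\mu_t = e^{-t}\mu_0 + \int_0^t e^{r-t}\mathcal{P}(\mu_r,\nu_r)\,dr,\quad \nu_t = e^{-B(t)}\nu_0 + \int_0^t A(r)e^{B(r)-B(t)}\mathcal{P}(\mu_r,\nu_r)\,dr,
\]
with $B(t):=\int_0^t A(w)\,dw$, the bounded continuity of $\kappa$ immediately gives that $\mathcal{P}(\mu_r,\nu_r)$ has bounded continuous density $p(r,z)=\int_X\int_X\kappa(x,y,z)\mu_r(dx)\nu_r(dy)$, and hence that $\mu_t$ admits the bounded continuous density $u(t,z)=e^{-t}u_0(z)+\int_0^t e^{r-t}p(r,z)\,dr$, with an analogous formula for $v(t,z)$.

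Next, fix $\varepsilon>0$ and a large $s\geq 0$ (to be chosen), and let $(\bar\mu_t,\bar\nu_t)$ solve the autonomous system (\ref{sysprobmesconst}) on $[s,\infty)$ with initial data $(\mu_s,\nu_s)\in\mathcal{M}_{[\alpha,\alpha]}$, which carries bounded continuous densities by the above. Corollary~\ref{strongconv} then yields the decomposition $\bar u(t,z)=e^{s-t}u(s,z)+\bar w(t,z)$ for the density of $\bar\mu_t$, with $\sup_z|\bar w(t,z)-u^*(z)|\to 0$ as $t\to\infty$, and analogously for $\bar v$. From the mild formulations,
\[
u(t,z)-\bar u(t,z)=\int_s^t e^{r-t}\bigl(p(r,z)-\bar p(r,z)\bigr)\,dr,
\]
where $\bar p$ is built from $(\bar\mu_r,\bar\nu_r)$. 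Decomposing $\mu_r\otimes\nu_r-\bar\mu_r\otimes\bar\nu_r=(\mu_r-\bar\mu_r)\otimes\nu_r+\bar\mu_r\otimes(\nu_r-\bar\nu_r)$ and using the estimate $\max\{d(\mu_r,\bar\mu_r),d(\nu_r,\bar\nu_r)\}\leq C\int_s^\infty|A(w)-A|\,dw$ extracted in the proof of Theorem~\ref{finalthm} (exponentially small in $s$), together with the regularity of $\kappa$, one obtains $\sup_z|p(r,z)-\bar p(r,z)|\to 0$ uniformly in $r\geq s$ as $s\to\infty$. Consequently $\sup_z|u(t,z)-\bar u(t,z)|<\varepsilon/2$ for $s$ large enough and all $t\geq s$, and a triangle inequality combined with Corollary~\ref{strongconv} and the uniform decay of $e^{s-t}\|u(s,\cdot)\|_\infty$ delivers $\sup_z|u(t,z)-u^*(z)|<\varepsilon$ for $t$ sufficiently large; the argument for $v(t,\cdot)$ is identical.

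The principal obstacle is the transfer of Wasserstein closeness of $(\mu_r,\nu_r)$ and $(\bar\mu_r,\bar\nu_r)$ into sup-norm closeness of the $\mathcal{P}$-image densities $p(r,\cdot)$ and $\bar p(r,\cdot)$. Since only boundedness and continuity of $\kappa$ are at our disposal, the clean way to close this gap is to combine the uniform tightness of the trajectories supplied by Lemma~\ref{lem5} (with its natural extension to the non-autonomous system, which follows from the Wasserstein convergence given by Theorem~\ref{finalthm}) with the equicontinuity of the family $\{\kappa(\cdot,y,z):y,z\in X\}$ on compact subsets of $X$ furnished by continuity of $\kappa$ on $X^3$, so that on a tight compact piece weak convergence yields the desired uniform-in-$z$ convergence of the integrals, while the tail is controlled by tightness and $\|\kappa\|_\infty$.
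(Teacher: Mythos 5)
The paper offers no proof of this corollary (it is stated as something ``one can deduce''), so I can only assess your argument on its own terms. Your architecture --- mild formulation, the explicit density formula $u(t,z)=e^{-t}u_0(z)+\int_0^t e^{r-t}p(r,z)\,dr$, comparison with the autonomous system (\ref{sysprobmesconst}) restarted at a late time $s$, Corollary~\ref{strongconv} for the autonomous part, and a triangle inequality --- is the natural way to combine the two cited results, and you correctly isolate the one step on which everything hinges. But that step is not closed. The estimate extracted from the proof of Theorem~\ref{finalthm} controls only the Wasserstein distance $d(\mu_r,\bar\mu_r)$, i.e.\ (by Lemma~\ref{waslem}) the $L^1$ norm of the \emph{cumulative distribution function} of $\mu_r-\bar\mu_r$; what you need is $\sup_{z\in X}|p(r,z)-\bar p(r,z)|$, a sup-norm bound on the difference of the image densities. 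Passing from the former to the latter requires quantitative regularity of $\kappa$ in $(x,y)$, uniformly in $z$ (e.g.\ a Lipschitz bound on $x\mapsto\kappa(x,y,z)$ uniform in $z$, which would let you integrate by parts against the CDF), and the hypothesis gives only boundedness and continuity.

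Your proposed repair --- tightness of the trajectories plus equicontinuity of $\{\kappa(\cdot,y,z)\}$ on compacts --- does not fill this hole. Continuity of $\kappa$ on $X^3$ yields uniform continuity only on compact subsets of $X^3$; since the supremum in the conclusion ranges over all $z\in X$ (unbounded in both examples of Section~5), you would need a modulus of continuity in $(x,y)$ that is \emph{uniform in $z$}, which does not follow from the stated assumptions. Moreover, tightness controls where the measures live, not where $z$ lives, so it cannot be used to truncate the $z$-variable; and weak convergence on a tight compact piece gives only pointwise-in-$z$ convergence of the integrals, not the uniform convergence you assert. The invocation of ``Lemma~\ref{lem5} with its natural extension to the non-autonomous system'' is likewise asserted rather than proved. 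In short: the reduction to a single transfer estimate is correct and matches the spirit of the paper, but the transfer itself --- which is exactly the content of Theorem~4 of \cite{rud_zwo} behind Corollary~\ref{strongconv}, and which you must reprove for the non-autonomous comparison term where that corollary cannot be used as a black box --- is missing.
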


\section{Examples}
Following examples come from considerations on hermaphroditic populations (see \cite{rud_zwo}), however they are also biologically reasonable for the two--sex populations case.
\subsection{Inheritance of mean parental trait with additive noise}
We suppose that $X=\R$. If $x,y\in \R$ are traits of parents, then we suppose that $\frac{x+y}{2}+Z$ is trait of their offspring, where $Z$ is zero--mean random variable distributed by some density $h$. We assume that $\E Z^2<\infty$ and $h(z)>0$ for all $z\in X$. Then the measure $k(x,y,dz)$ has following density
$$\kappa(x,y,z)=h\bigg(z-\frac{x+y}{2}\bigg).$$ It is easy to check that $\frac{\partial}{\partial x} \mathcal K(x,y,z)=-\frac{1}{2} h\Big(z-\frac{x+y}{2}\Big),$
and condition (i) from Theorem~\ref{mainthm} is satisfied if
$$\int_{-\infty}^\infty \big|h(z-a)-h(z-b)\big|dx<2$$
for all $a,b\in\R$. The above inequality is valid, since $h$ is probability density function, positive everywhere. 

Now we proceed to condition (ii). Fix two measures $\mu,\nu\in\mathcal M_{[\alpha,\beta]}$. Then
\begin{multline*}
\int_{-\infty}^\infty z^2\mathcal P(\mu,\nu)(dz)=\int_{-\infty}^\infty\int_{-\infty}^\infty\int_{-\infty}^\infty\bigg(\Big(z-\frac{x+y}{2}\Big)^2+\Big(z-\frac{x+y}{2}\Big)(x+y)+\frac{(x+y)^2}{4}\bigg)\\ \times h\bigg(z-\frac{x+y}{2}\bigg)\mu(dx)\nu(dy)dz\leq \E Z^2+\frac{1}{4}\int_{-\infty}^\infty\int_{-\infty}^\infty(x+y)^2\mu(dx)\nu(dy)\\
=\E Z^2 +\frac{1}{2}\bigg(\int_{-\infty}^\infty x\mu(dx)\bigg)\bigg(\int_{-\infty}^\infty x\nu(dx)\bigg)+\frac{1}{2}\max\bigg\{\int_{-\infty}^\infty x^2\mu(dx), \int_{-\infty}^\infty x^2\nu(dx)\bigg\}.
\end{multline*}
Since $\mu,\nu$ have their first moments upper--bounded by $\beta$, condition (ii) of Theorem~\ref{mainthm} is satisfied with $\gamma=2$, $C=\E Z^2+\beta^2/2$ and $L=\frac{1}{2}.$ From Theorem~\ref{mainthm} there exists unique limiting distribution $\mu^*\in\mathcal M_{[\alpha,\beta]}$ with first moment (\ref{limitingmean}) such that $\mu_t,\nu_t\to\mu^*$ in $\mathcal M_{[\alpha,\beta]}$. If additionally $\alpha=\beta$ and $h$ is bounded and continuous, limiting measure $\mu^*$ has continuous and bounded density $u^*$ and convergence of absolute continuous parts of $\mu_t,\nu_t$ is uniform by Corollary~\ref{strongconv}.

It turns out that, in this case of trait inheritance, it is possible to find the limiting distribution $\mu^*$ explicitly which has the form of infinite sequence of measure convolutions. In the case when $h$ has $0$--mean normal distribution with standard deviation $\sigma$, then the limiting distribution is also normal, with mean $\bar x=\alpha=\beta$ and standard deviation $\sqrt{2}\sigma$ (see \cite{rud_zwo}).

\subsection{Inheritance of mean parental trait with multiplicative noise} The following example is more reasonable for description of non--negative traits such as average body mass or height.
Thus we suppose that $X=[0,\infty)$. If $x,y$ are parental traits, then the trait of offspring is given by $(x+y)Z$, where $Z$ is $[0,1]$--valued random variable with mean $\frac{1}{2}$, distributed by density $h$. Then the density $\kappa(x,y,\cdot)$ of measure $k(x,y,dz)$ has the form
$$\kappa(x,y,z)=\frac{1}{x+y} h\bigg(\frac{z}{x+y}\bigg),$$
for $z\in[0,x+y]$, $x+y>0$ or $\kappa(x,y,z)=0$ otherwise. Assume that there exists $\varepsilon>0$ such that support of the function $h$ contains $(0,\varepsilon)$. One can easily compute
$$\frac{\partial}{\partial x} \mathcal K(x,y,z)=-h\bigg(\frac{z}{x+y}\bigg) \frac{z}{(x+y)^2.}$$
The condition (i) of Theorem~\ref{mainthm} is equivalent to 
$$\int_0^\infty \Big|h\Big(\frac{z}{a}\Big)\frac{z}{a^2}-h\Big(\frac{z}{b}\Big)\frac{z}{b^2}\Big|dz<1,$$
for all $a,b\in[0,\infty)$. Above inequality is satisfied, since the function $h$ has mean equal to $\frac{1}{2}$, and the interval $(0,\varepsilon)$ is in its support. 

We check condition (ii). Take $\mu,\nu\in\mathcal M_{[\alpha,\beta]}$. Then
\begin{multline*}
\int_0^\infty z^2\mathcal P(\mu,\nu)dz=\E Z^2 \int_0^\infty\int_0^\infty (x+y)^2\mu(dx)\nu(dy)\leq \\
 2\E Z^2\bigg(\int_{-\infty}^\infty x\mu(dx)\bigg)\bigg(\int_{-\infty}^\infty x\nu(dx)\bigg)+2\E Z^2\max\bigg\{\int_{-\infty}^\infty x^2\mu(dx), \int_{-\infty}^\infty x^2\nu(dx)\bigg\}.
\end{multline*}
Since the first moments of the measures $\mu,\nu$ are bounded by $\beta$ and $2\E Z^2 <2\E Z=1$, condition (ii) is satisfied with $\gamma=2$, $C=2\beta^2 \E Z^2$ and $L=2\E Z^2$.
\section{Conclusions}
In the paper we introduced some individual--based model in order to describe the evolution of non--sex--liked phenotypic traits in two--sex populations. The model includes semi--random mating of individuals of the opposite sex, natural death and intra--specific competition. Having passed the number of individuals to infinity, we derived the macroscopic system of equations for evolution of trait distributions. The main results of the investigation on solutions of the system are: existence and uniqueness of solutions in space of measures, study of total number of individuals in order to derive criteria for persistence or extinction, formulation of the conditions implying existence of the unique stable distribution and its asymptotic stability. Moreover, under additional assumptions, we studied the existence and asymptotic properties of solutions from the standpoint of their densities.

Now we interpret some of the results in biological language. We start with inequalities (\ref{persis}) and (\ref{extinct}). If we consider the numbers $\alpha_m=p_m/D_m$ and $\alpha_f=p_f/D_f$ as an environmental adaptation of males and females, then  (\ref{persis}) implies that the mean environmental adaptation in population is greater than one. On average, one dying individual is replaced by more than one newborn, and consequently whole the population persists. The opposite inequality (\ref{extinct}) leads to extinction of both subpopulations. A possible scenario assumes that one of the numbers $\alpha_m,\alpha_f$ is strictly smaller than $1$. Despite this, it is still possible that (\ref{persis}) holds. It means, that the whole population can survive, although one of the subpopulations has smaller mating rate. In particular, if we take $p_m=0$, then populations with minor male mating rates are also covered by our model. In that case the growth of the population depends on female mating and death rates, and inequality $p_f/D_f>2$ means that the population persists.

The existence and asymptotic stability of the stable distribution $\mu^*$, and the fact that this distribution is the same for both male and female populations is an intuitive consequence of Fisher's principle and inheritance of traits which are non--sex--linked. The result suggests that after a long time two--sex populations behave as they were hermaphroditic, provided we investigate only the evolution of non--sex--linked traits. This result enlarges the area of applications of the hermaphroditic model derived and studied in \cite{rud_zwo} also to two--sex populations.

The future perspectives for the model presented in the paper are multidirectional. The most interesting issue would be to study analogous model including assortative mating of individuals with aid of trait--dependent marriage functions instead of semi--random mating (see e.g. \cite{yang}). Marriage functions reflect preferences for possible partners in the population and influence the shape of the trait distribution. The long--time behavior of the corresponding macroscopic equations could answer how big this influence is in stable population, and what shape of limiting distribution we should expect. Another issue is to study how trait values of individuals affect their fitness. Since in general our model allows to include trait--dependent rates, it would be interesting to study long--time behavior of corresponding solutions in case when there are two different fitness optima for males and females (see e.g. \cite{bond}).

\end{document}